\numberwithin{equation}{section}
\newtheorem{thrm}{Theorem}[section]
\newtheorem{lemma}[thrm]{Lemma}
\newtheorem{prop}[thrm]{Proposition}
\newtheorem{cor}[thrm]{Corollary}
\newtheorem{conv}[thrm]{Convention}
\newcommand{\R}{\mathbb{R}}
\newcommand{\Hnn}{\mathbb{H}^{n+1}}
\newcommand{\QH}{\boldsymbol {G\,(\mathbb{H})}}
\newcommand{\e}{\textbf {e}}
\newcommand{\A}{{A}}
\newcommand{\B}{{B}}
\newcommand{\C}{{\mathbb C}}
\renewcommand{\H}{{\mathbb H}}
\newcommand{\D}{{D}}
\newcommand{\spl}{\mathfrak{ sp}(1)}
\begin{document}

\begin{abstract}
We show that any compact quaternionic contact (abbr. qc)
hypersurfaces in a hyper-K\"ahler manifold which is not totally
umbilical has an induced qc structure, locally qc homothetic to
the standard 3-Sasakian sphere. We also show
that any nowhere umbilical qc hypersurface in a hyper-K\"ahler manifold is endowed with an involutive
7-dimensional distribution whose integral leafs are locally
qc-conformal to the standard 3-Sasakian sphere.
\end{abstract}

\keywords{quaternionic contact, hypersurfaces, hyper-K\"ahler, quaternionic projective space, 3-Sasaki}

\subjclass{58G30, 53C17}

\title[Non-umbilical quaternionic contact  hypersurfaces in hyper-K\"ahler manifolds]
{Non-umbilical quaternionic contact  hypersurfaces in hyper-K\"ahler manifolds}

\date{\today}

\author{Stefan Ivanov}
\address[Stefan Ivanov]{University of Sofia, Faculty of Mathematics and Informatics,
blvd. James Bourchier 5, 1164,
Sofia, Bulgaria}
\address{and Institute of Mathematics and Informatics, Bulgarian Academy of
Sciences}
\email{ivanovsp@fmi.uni-sofia.bg}

\author{Ivan Minchev}
\address[Ivan Minchev]{University
of Sofia, Faculty of Mathematics and Informatics, blvd. James Bourchier 5, 1164 Sofia, Bulgaria;
Department of Mathematics and Statistics, Masaryk University, Kotlarska 2, 61137 Brno,
Czech Republic}
\email{minchev@fmi.uni-sofia.bg}

\author{Dimiter Vassilev}
\address[Dimiter Vassilev]{
University of New Mexico\\
Albuquerque, NM 87131}
\email{vassilev@unm.edu}

\maketitle


\setcounter{tocdepth}{2} \tableofcontents

\section{Introduction}

Any real hypersurface in a complex manifold carries a natural CR
structure which in the case of a strictly positive Levi form
endows the surface with a natural pseudo-Hermitian structure. The
goal of this paper is to consider a hyper-K\"ahler manifold and
describe the real hypersurfaces  which carry a natural
quaternionic contact (qc) structure. The concept of a qc structure
was originally introduced by O. Biquard \cite{Biq1} as a model for
the conformal boundary at infinity of the quaternionic hyperbolic
space. According to a result in \cite{Biq1, D}, every real
analytic qc structure is the conformal infinity of a unique
(asymptotically hyperbolic) quaternionic-K\"ahler metric defined
in a neighborhood of the qc structure. Similar to the CR case the
question of embedded quaternionic contact hypersurfaces is a
natural one, but  in contrast to the CR case it imposes a rather
strong conditions on the hypersurface. The situation has the
flavor of the K\"ahler versus the hyper-K\"ahler case. As well
known any complex submanifold of  a K\"ahler manifold is a
K\"ahler manifold and a K\"ahler metric is
locally given by a K\"ahler
potential
. In contrast, a hyper-complex manifold of a
hyper-K\"ahler manifold must be totally geodesic and (in general)
there is no hyper-K\"ahler potential (the structure is rigid).
This suggests that we can expect that there are few quaternionic
contact hypersurfaces in a hyper-K\"ahler manifold.  Indeed, we
showed in \cite{IMV5} that given a connected qc-hypersurface $M$
in the flat quaternion space $\H^{n+1}$, then,  up to a
quternionic affine transformation of $\H^{n+1}$, $M$ is contained
in one of the following three hyperquadrics (the 3-Sasakain
sphere, the hyperboloid and the quaternionic Heisenberg group):
\begin{equation}\label{e:model spaces}
(i) \ \ |q_1|^2+\dots+|q_n|^2 + |p|^2=1,\quad
(ii)\ \  |q_1|^2+\dots+|q_n|^2 - |p|^2=-1 ,\quad (iii)\ \ |q_1|^2+\dots+|q_n|^2 +\R{e}(p)=0,
\end{equation}
 where $(q_1,q_2,\dots q_n,p)$ denote the standard quaternionic coordinates of $\Hnn$. We recall that the above three examples are locally qc-conformal.  Furthermore, it was shown  \cite{IMV5} in the general hyper-K\"ahler case  that the Riemannian curvature of the ambient space has to be degenerate along the normal to the qc-hypersurface vector field.

The notion of qc-hypersurface was first defined by Duchemin \cite{D1} in the general setting of quaternionic manifold. A manifold $K$ is
called {\it quaternionic} if $K$ is  endowed with  a 3-dimensional
sub-bundle $\mathcal Q^K\subset End(TK)$ locally generated by a
pointwise quaternionic structure $J_1,J_2,J_3$ together with   a
torsion free connection that preserves $\mathcal Q^K$.

An embedding  $\iota :M\rightarrow K$ of a qc manifold $M$ with a horizontal space $H$ equipped with a quaternion structure $\mathcal Q^H$
, see Section \ref{QC-manifolds} for precise definition,  into a quaternionic manifold $(K,\mathcal
Q^K)$ is called a {\it qc embedding} if the differential $\iota_*$
intertwines  $\mathcal Q^K$ and $\mathcal Q^H$, i.e., if
\begin{equation}\nonumber
\mathcal Q^H=\iota_*^{-1}\, \mathcal Q^K\,\iota_*
\end{equation}
is satisfied at each point of $M$, where $\mathcal Q^H$ denotes
the  point-wise quaternionic structure of the horizontal
distribution $H\subset TM$. In particular, the
image $\iota_*(H)$  coincides with the maximal $\mathcal
Q^K-$invariant subspace of $\iota_*(TM)\subset TK$.  A real
hypersurface  $M\subset K$ in a quaternionic manifold $K$  is
called a {\it qc hypersurface} if there exists a qc structure on
$M$ for which the inclusion map is a qc embedding. Notice that, if
such a qc structure exists, then it is unique, since the qc
distribution $H$  is the
maximal $\mathcal Q^K$ invariant subspace of $TM$.

Duchemin \cite{D1}showed that a real analytic qc manifold can be realized as a qc-hypersurface in an appropriate quaternionic manifold.

In this paper we consider qc-hypersurfaces in a hyper-K\"ahler manifold. Our main result in the case of a compact embedded qc-hypersurface is the following.
\begin{thrm}\label{t:main1}
Let $M$ be a compact  qc-hypersurface of a hyper-K\"ahler
manifold. If $M$ is not a totally umbilical hypersurface, then
the qc-conformal class of the embedded qc structure contains a
qc-Einstein structure of positive qc-scalar curvature which is
locally 
qc-equivalent to the 3-Sasakian sphere.
\end{thrm}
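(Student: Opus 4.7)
The plan is to reduce the statement to the known classification of qc-Einstein structures of positive qc-scalar curvature. I would start by exploiting the ambient curvature constraint from \cite{IMV5} which forces the Riemannian curvature of the hyper-K\"ahler manifold to be degenerate along the unit normal $N$ at every point of $M$. Since the hyper-K\"ahler curvature takes values in $\mathfrak{sp}(n)$ with respect to the quaternionic action on $TM|_M \cap (\iota_* H)$, this algebraic condition, combined with the Gauss and Codazzi equations, translates into rigid pointwise identities on the qc second fundamental form $II$ of $\iota$, viewed as a $\mathcal{Q}^H$-equivariant object on $H$. At non-umbilical points, these identities should pin down the trace-free part of $II$ up to a function, and relate the qc-Ricci and qc-scalar curvature of the induced qc structure to $II$ in a way that is entirely intrinsic.

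Next, I would use the non-umbilicity hypothesis to single out a canonical positive function $\phi$ on the non-umbilical open set $U\subset M$, built from $II$ (a natural candidate is a suitable power of the norm of the trace-free second fundamental form, or of a scalar invariant derived from it). Performing the qc-conformal change $\eta \mapsto \phi\, \eta$, I would check that the induced qc structure becomes qc-Einstein; the required identities reduce, after this rescaling, to the vanishing of the trace-free torsion endomorphisms that measure the failure of qc-Einstein, and these should follow from the hyper-K\"ahler Gauss/Codazzi identities derived in the first step. Positivity of the resulting qc-scalar curvature would be read off directly from the construction, since $\phi$ is positive on $U$ and the qc-scalar curvature is expressible as a non-negative combination of squared components of $II$.

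The nontrivial global step is to pass from this local construction on $U$ to all of $M$. Here I would use two ingredients: first, hyper-K\"ahler manifolds are real analytic and qc structures are determined by real analytic data on $M$, so the set of umbilical points is real analytic, forcing $U$ to be open and dense once it is nonempty; second, the conformally rescaled qc-Einstein structure on $U$, being locally qc-equivalent to the 3-Sasakian sphere by the classification theorem of qc-Einstein manifolds of positive qc-scalar curvature, has no singular behavior as one approaches $\partial U$. Together with compactness of $M$, this should force $U = M$ and yield the global conclusion.

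The main obstacle I anticipate is the middle step, namely identifying the correct qc-conformal factor $\phi$ and verifying that the rescaled structure is qc-Einstein. The algebraic bookkeeping with the $\mathfrak{sp}(n)\oplus\mathfrak{sp}(1)$-decomposition of $II$ and the qc torsion tensors is delicate, and it is here that the specific structure of the hyper-K\"ahler ambient curvature, rather than a merely quaternionic one, must play a decisive role.
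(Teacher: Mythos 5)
There is a genuine gap at the heart of your plan: the step where you pass from \lq\lq{}qc-Einstein with positive qc-scalar curvature\rq\rq{} to \lq\lq{}locally qc-equivalent to the 3-Sasakian sphere\rq\rq{} by invoking a classification theorem. No such classification gives the sphere: a qc-Einstein structure with $S>0$ is (locally) 3-Sasakian, but there are many 3-Sasakian manifolds that are not locally qc-conformal to the round sphere, so qc-Einstein alone proves nothing about local sphericity. This is precisely where the paper's argument has real content and where non-umbilicity must be used analytically rather than just to define a conformal factor: the calibrated qc-Einstein structure of \cite{IMV5} exists globally on $M$ (no open-dense set or extension across umbilical points is needed), non-umbilicity is equivalent to the calibrating function being non-constant, and the new input is that $\phi=\tfrac12 f^2$ satisfies the third-order system \eqref{eqbi3}--\eqref{eqbi1}, which transfers via \eqref{nabla-lambda} to the Gallot--Obata--Tanno equation \eqref{eqlv1} for the Riemannian metric $h^S$. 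Compactness plus the Obata-type rigidity theorem then forces $(M,h^S)$ to be the round sphere, and comparing the Biquard and Levi-Civita curvatures yields $W^{qc}=0$, i.e.\ local qc-conformal flatness, after which the appendix result (Theorem \ref{qcc}) gives the local homothety with the 3-Sasakian sphere. Your outline contains no substitute for this Obata-type mechanism.

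Two further points are incorrect as stated. First, positivity of the qc-scalar curvature is not \lq\lq{}read off directly\rq\rq{} from positivity of the conformal factor or from $II$: the hyperboloid \eqref{e:model spaces}(ii) is a non-compact qc-hypersurface of flat $\Hnn$ whose calibrated structure has $S<0$, so positivity genuinely requires compactness; in the paper it comes from the eigenvalue identity $\triangle(\triangle\phi)=-4(n+1)S\,\triangle\phi$ obtained by tracing \eqref{eqbi1}, together with the fact that $S=0$ would force $\phi$ constant, contradicting non-umbilicity (Corollary \ref{l:umbilic}). Second, your candidate conformal factor, a power of the norm of the trace-free part of $II$, vanishes at umbilical points and is not the calibrating function; the correct factor $f=\mu^{1/(n+2)}$ is built from the definite horizontal restriction of $II$ via the volume-form comparison \eqref{e:def pf mu} and is positive everywhere, which is why no real-analyticity or density argument is needed. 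The first step of your plan (Gauss--Codazzi plus the degeneracy of the ambient curvature along $N$) is consistent with \cite{IMV5}, but by itself it only reproduces the already known existence of the calibrated qc-Einstein structure, not the sphericity asserted in the theorem.
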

We note that the existence of a conformal factor leading to a
qc-Einstein structure, called calibrated qc-structure,  was
established earlier by the authors, see \cite[Theorem 1.2]{IMV5}.
Thus, the main new result here is the qc-conformal flatness of the
calibrated qc-Einstein structure. In the connected
simply-connected case the above Theorem implies that the
qc-conformal class of the embedded qc structure contains a qc
structure 
qc-equivalent to the round 3-Sasakian sphere, see also Theorem
\ref{main12}. It is well known that any totally umbilical
hypersurface of a hyper-K\"ahler manifold is a qc-hypersurface
whose qc structure is generated by its induced 3-Sasakian metric.
Furthermore, a 3-Sasakian space can be embedded as a totally
umbilical qc-hypersurface in a hyper-K\"ahler manifold, namely in
its metric cone. The hyperquadric
$$|q_1|^2+\dots+|q_n|^2 + 2|p|^2=1$$ in $\Hnn$ is an example of a
compact qc-hypersurface which is not totally umbilical with respect to the standard flat hyper-K\"ahler metric of $\Hnn$.

The case of a local qc-embedding is considered in Section \ref{s:loc emb} where we prove results which  in the seven dimensional case give  the following theorem.
\begin{thrm}\label{t:main2}
A seven dimensional everywhere non-umbilical qc-hypersurface $M$
embedded in a hyper-K\"ahler manifold is qc-conformal to a
qc-Einstein structure which is locally
qc-equivalent to  the 3-Sasakian sphere, the quaternionic
Heisenberg group or the hyperboloid.
\end{thrm}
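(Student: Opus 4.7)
The plan is to carry out the seven-dimensional local version of the same strategy that, in dimensions $4n+3\ge 11$ and in the compact case, underlies Theorem \ref{t:main1}. First I reduce to the qc-Einstein setting by invoking \cite[Theorem 1.2]{IMV5}: on any qc-hypersurface $M$ of a hyper-K\"ahler manifold (non-umbilical, so that the construction of that theorem is non-degenerate) one can find in the qc-conformal class a calibrated qc-Einstein structure $\eta$. Its qc-scalar curvature $S$ is then constant, and its sign will dictate which of the three model hyperquadrics $(i)$--$(iii)$ of \eqref{e:model spaces} appears in the conclusion. Once this reduction is in place, the theorem is equivalent to showing that $\eta$ is locally qc-conformally flat, after which the classification of qc-conformally flat qc-Einstein manifolds identifies $\eta$ locally with the 3-Sasakian sphere ($S>0$), the quaternionic Heisenberg group ($S=0$), or the hyperboloid ($S<0$), all three of which are qc-Einstein.

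To produce the vanishing of the conformal obstruction I would use the geometry of the embedding. As already recorded in the introduction (and proved in \cite{IMV5}), the Riemann tensor of the ambient hyper-K\"ahler manifold is degenerate along the unit normal to $M$. Through the Gauss and Codazzi–Mainardi equations this degeneracy converts into algebraic and differential identities linking the second fundamental form of $M$ with the curvature of the Biquard canonical connection of $\eta$. Combined with the qc-Einstein condition (vanishing torsion endomorphism, curvature of prescribed algebraic type) and the contracted differential Bianchi identity on $M$, these identities should be sufficient to force the conformally invariant qc-Weyl-type tensor to vanish.

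The main obstacle is the last step, because dimension seven is genuinely special. In dimensions $4n+3 \ge 11$ the qc-conformal curvature tensor of Ivanov–Vassilev is a single $\mathrm{Sp}(n)\mathrm{Sp}(1)$-invariant tensor whose vanishing characterizes local qc-conformal flatness; in dimension seven ($n=1$) this tensor vanishes identically, and the correct obstruction is a separate $\mathrm{Sp}(1)\mathrm{Sp}(1)$-invariant object built from the qc-Ricci-type tensors and the second covariant derivative of the torsion. Consequently the calculations from Step 2 have to be pushed further: one must extract the Codazzi contribution of the shape operator at the level of second derivatives of the qc curvature, and check that every component of the seven-dimensional obstruction is annihilated. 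I expect this to be the heart of the technical work, mirroring — but not following verbatim — the corresponding computation in the proof of Theorem \ref{t:main1}.

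Once the local qc-conformal flatness of $\eta$ is established, the conclusion is immediate: a qc-Einstein structure that is locally qc-conformally flat is locally qc-equivalent to one of the three qc-Einstein model spaces, the choice being determined by the sign of $S$. This gives the three cases listed in Theorem \ref{t:main2} and completes the argument.
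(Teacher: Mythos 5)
Your reduction to the calibrated qc-Einstein structure and your final step (qc-conformally flat $+$ qc-Einstein $\Rightarrow$ locally one of the three models, by sign of $S$, i.e.\ Theorem~\ref{qcc}) match the paper. The genuine gap is the middle step: you never actually prove local qc-conformal flatness, you only assert that Gauss--Codazzi identities together with the degeneracy of the ambient curvature along the normal ``should be sufficient'' to kill the obstruction. Moreover, the premise on which you hang the ``heart of the technical work'' is false: the qc-conformal curvature $W^{qc}$ of \cite{IV1} does \emph{not} vanish identically when $n=1$; unlike the $3$-dimensional CR and conformal Riemannian situations, $W^{qc}$ is the local flatness obstruction in dimension seven as well (under the integrability convention \eqref{bi1} adopted here), and indeed there exist $7$-dimensional qc-Einstein (3-Sasakian) spaces with $W^{qc}\neq 0$. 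So the ``separate obstruction built from Ricci-type tensors and second derivatives of the torsion'' that you propose to annihilate is not the relevant object, and no computation is offered that would annihilate the actual one.

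What the paper does instead is quite different from a Gauss--Codazzi/Bianchi argument. The parallelism \eqref{parallel} of the extension $\mathfrak{W}$ of $-f\,II$ yields a third-order Obata-type system for $\phi=\tfrac12 f^2$ with respect to the Biquard connection (Lemma~\ref{l:key lemma}, equations \eqref{eqbi3}, \eqref{eqbi2}, \eqref{eqbi1}). In dimension seven the horizontal space is $4$-dimensional, so the quaternionic invariance \eqref{eqbi2} of the horizontal Hessian forces $\nabla^2\phi(X,Y)=h\,g(X,Y)$; the everywhere non-umbilical hypothesis gives $\nabla\phi\neq 0$ pointwise (Corollary~\ref{l:umbilic}), so $\{\nabla\phi, I_1\nabla\phi, I_2\nabla\phi, I_3\nabla\phi\}$ frames $H$. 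The Ricci identity then expresses $R(X,Y,Z,\nabla\phi)$ through $dh$, and combined with the qc-Einstein condition and formula \eqref{wqc} this gives $W^{qc}(X,Y,Z,\nabla\phi)=0$; the symmetries of $W^{qc}$ (Lemma~\ref{propqc}) and the frame property then yield $W^{qc}\equiv 0$ (Lemma~\ref{dim7-phi}), after which Theorem~\ref{qcc} concludes. To repair your proposal you would need either to reproduce this PDE mechanism or to supply in full the curvature identities you allude to; as written, the decisive flatness statement is unproved and the dimensional discussion guiding it is incorrect.
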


The proofs of the main results rely on the known and some new properties of the "calibratng" qc-conformal factor. More precisely, as shown in \cite{IMV5}, given a qc-hypersurface $M$ in a hyper-K\"ahler manifold $K$ there is a positive function $f$ on $M$ called "calibrating" function so that the qc structure on $M$ obtained from the embedded one with $f$ as a qc-conformal factor is qc-Einstein, see \cite[Lemma 3.7]{IMV5}. Furthermore, if  $II$ is  the second fundamental form of $M$,  then the (0,2) tensor $f\,II$ extends to a covariant constant along $M$, see \cite[Theorem 3.1]{IMV5}. The new key points for the results of the current paper are certain identities for  the second and third order (horizontal) covariant derivative of the calibrating function $f$. Using the bracket generating condition and the relation between the Biquard and Levi-Civita connections these identities lead to a  third order differential system on $M$ well studied in the Riemannian case by several authors, see \cite{Obata, Galo,Tan, Matv}.  In the compact case, this system is known to have the remarkable property that  it admits a non-constant solution only on Riemannian manifolds which are locally isometric to the round sphere.

\begin{conv}\label{conven}

Throughout the paper, unless explicitly stated otherwise, we will
use the following notation.

\begin{enumerate}[a)]
\item All manifolds are assumed to be $C^\infty$ and connected.
\item The triple $(i,j,k)$ denotes any positive permutation of
$(1,2,3)$.
\item $s,t$ are any numbers from the set $\{1,2,3\}$, $s,t \in \{1,2,3\}$.
\item For a given decomposition $TM=V\oplus H$ we denote by
$[.]_V$ and $[.]_H$ the corresponding projections to $V$ and $H$.
\item ${A}, {B}, {C}$, etc. will denote
sections of the tangent bundle of $M$,  ${A},
{B}, {C}\in  TM$.
\item $X,Y,Z,U$ will denote horizontal vector fields,
$X,Y,Z,U\in H$.
\end{enumerate}
\end{conv}

{\bf Acknowledgments.} S.I. and I.M. are partially supported by
Contract DFNI I02/4/12.12.2014 and  Contract 195/2016 with the
Sofia University "St.Kl.Ohridski". I.M. is supported by a SoMoPro
II Fellowship which is co-funded by the European
Commission\footnote{This article reflects only the author's views
and the EU is not liable for any use that may be made of the
information contained therein.} from \lq\lq{}People\rq\rq{}
specific programme (Marie Curie Actions) within the EU Seventh
Framework Programme on the basis of the grant agreement REA No.
291782. It is further co-financed by the South-Moravian Region. DV
was partially supported by Simons Foundation grant \#279381. 
The authors would like to
thank the Masaryk University, Brno, for the hospitality and
the financial support provided while visiting the Department of
Mathematics.

\section{Preliminaries}
\subsection{Quaternionic contact manifolds}\label{QC-manifolds}

 Here, we recall briefly the relevant facts and notation needed for this paper and refer to \cite{Biq1}, \cite{IMV1} and \cite{IV3} for a more detailed exposition. A quaternionic contact (qc) manifold is a $(4n+3)$-dimensional manifold  $M$ with a codimension three distribution $H$  equipped with  an $Sp(n)Sp(1)$ structure locally defined by an $\mathbb{R}^3$-valued 1-form $\eta=(\eta_1,\eta_2,\eta_3)$. Thus, $H=\cap_{s=1}^3 Ker\, \eta_s$
carries a positive definite symmetric tensor $g$, called the horizontal metric, and a compatible rank-three bundle $\mathcal {Q}^H$
consisting of endomorphisms of $H$ locally generated by three orthogonal almost complex
structures $I_s$,  satisfying the unit quaternion relations: (i) $I_1I_2=-I_2I_1=I_3, \quad $ $I_1I_2I_3=-id_{|_H}$; \hskip.1in (ii) $g(I_s.,I_s.)=g(.,.)$; and \hskip.1in  (iii) the
compatibility conditions  $2g(I_sX,Y)\ =\ d\eta_s(X,Y)$, $
X,Y\in H$  hold true. Unlike the CR case, in the qc case the horizontal space determines uniquely the qc-conformal class, cf. \cite{IMV5}.  For this reason very often we will identify the qc structure with the   $\mathbb{R}^3$-valued 1-form $\eta$ while supressing the remaining data. We also note that by virtue of its definition a quaternionic contact manifold is orientable.

Two qc structures $\eta$ and $\bar\eta$ on a manifold $M$ are called \emph{qc-conformal} to each other if  $\bar\eta=\mu\Psi\eta$ for a positive smooth
function $\mu$ and an $SO(3)$ matrix $\Psi$ with smooth functions as
entries.   A diffeomorphism $F$ between two qc manifolds $M$ and $\bar M$ is  called \emph{quaternionic contact conformal (qc-conformal) transformation}  if $F*\bar\eta=\mu\Psi\eta$ .  The qc-conformal curvature tensor $W^{qc}$, introduced in \cite{IV1}, is the
obstruction for a qc structure to be locally qc-conformally  to the
standard 3-Sasakian structure on the $(4n+3)$-dimensional sphere \cite{IV1,IV3}.  As already noted in the introduction the 3-Sasakain sphere, the hyperboloid and the quaternionic Heisenberg group are all locally qc-conformal to each other.

As shown in \cite{Biq1}, there is a "canonical" connection associated to every qc manifold of dimension at least eleven. In the seven dimensional case the existence of such a connection requires the qc structure to be integrable \cite{D}. The integrability condition is equivalent to the existence of Reeb vector fields \cite{D}, which (locally) generate the supplementary to $H$ distribution $V$. The Reeb vector fields $%
\{\xi_1,\xi_2,\xi_3\}$ are determined by \cite{Biq1}
\begin{equation}  \label{bi1}
 \eta_s(\xi_t)=\delta_{st}, \qquad (\xi_s\lrcorner
d\eta_s)_{|H}=0,\quad (\xi_s\lrcorner d\eta_t)_{|H}=-(\xi_t\lrcorner
d\eta_s)_{|H},
\end{equation}
where $\lrcorner$ denotes the interior multiplication.  Henceforth, by a qc structure in dimension $7$ we shall mean a qc structure satisfying \eqref{bi1} and refer to the "canonical" connection as \emph{the Biquard connection}. The Biquard connection is the unique linear connection preserving the decomposition $TM=H\oplus V$ and the $Sp(n)Sp(1)$ structure on $H$ with torsion $T$ determined by $T(X,Y)=-[X,Y]_{|_V}$ while  the endomorphisms $T({\xi_s},.): H \rightarrow H$ belong to the orthogonal complement $(sp(n)+sp(1))^{\perp}\subset GL(4n,R)$.

The covariant derivatives with respect to the Biquard connection of
the endomorphisms $I_s$  and the Reeb vector fields are given by
\begin{equation}\label{der}
\nabla I_i=-\alpha_j\otimes I_k+\alpha_k\otimes I_j,\qquad
\nabla\xi_i=-\alpha_j\otimes\xi_k+\alpha_k\otimes\xi_j.
\end{equation}
 The $\spl$-connection 1-forms  $\alpha_1,\alpha_2, \alpha_3$, defined by the
above equations satisfy \cite{Biq1}
\begin{equation*}
\alpha_i(X)=d\eta_k(\xi_j,X)=-d\eta_j(\xi_k,X),\qquad X\in H.
\end{equation*}

Let $R=[\nabla,\nabla]-\nabla_{[.,.]}$ be the curvature tensor of
$\nabla$ and $R(\A,\B,\C,\D)=g(R_{\A,\B}\C,\D)$ be the
corresponding curvature tensor of type (0,4). The qc Ricci tensor
$Ric$, the qc-Ricci forms $\rho_s$ and the normalized qc scalar
curvature $S$ are defined by
\begin{equation}\nonumber
Ric(\A,\B)=\sum_{a=1}^{4n}R(e_a,\A,\B,e_a), \quad 4n\rho_s(A,B)=\sum_{a=1}^{4n}R(A,B,e_a,I_se_a), \quad 8n(n+2)S=Scal=\sum_{a=1}^{4n}Ric(e_a,e_a),
\end{equation}
where $e_1,\dots,\e_{4n}$ of $H$ is an $g$-orthonormal frame on $H$.

We say that $(M,\eta)$ is a qc-Einstein manifold if the restriction of the qc-Ricci tensor to the horizontal space $H$ is  trace-free, i.e.,
$$Ric(X,Y)=\frac{Scal}{4n}g(X,Y)=2(n+2)Sg(X,Y), \quad X,Y\in H.$$
The qc-Einstein condition is equivalent to the vanishing of the
torsion endomorphism of the Biquard connection, $T(\xi_s,X)=0$
\cite{IMV1}.  It is also known \cite{IMV1,IMV4}  that the
qc-scalar curvature of a qc Einstein manifold is  constant and the
vertical distribution is integrable.

 By \cite[Theorem 5.1]{IMV4}, see also \cite{IV2} and \cite[Theorem 4.4.4]{IV3} for alternative proofs in the case $Scal\not=0$, a qc-Einstein structure is characterised by either of the following equivalent conditions
\begin{enumerate}[i)]
\item  locally, the given qc structure is defined by  1-form $(\eta_1,\eta_2,\eta_3)$  such that for some constant $S$, we have
\begin{equation}\label{str_eq_mod}
d\eta_i=2\omega_i+S\eta_j\wedge\eta_k;
\end{equation}
\item locally,  the given qc structure is defined by a 1-form $(\eta_1,\eta_2,\eta_3)$  such that the corresponding connection 1-forms vanish on $H$ and (cf. the proof of Lemma 4.18 of \cite{IMV1})
\begin{equation}\label{e:vanishing alphas qc-Einstein}
\alpha_s=-S\eta_s.
\end{equation}
\end{enumerate}

\subsubsection{The correspondin (pseudo) Riemannian geometry }

Let  $M$ be a qc-Einstein manifold.  Note that, by applying an
appropriate qc homothetic transformation, we can aways reduce a
general qc-Einstein structure to one whose normalized qc-scalar
curvature $S$ equals $0,2$ or $-2$. Consider the one-parameter
family of (pseudo)  Riemannian metrics  $h^{\mu},\ \mu\ne 0$ on
$M$ by letting
\begin{equation}\label{mh} h^{\mu}=g|_H+\mu(\eta_1^2+\eta^2_2+\eta_3^2).
\end{equation}
Let $\nabla^\mu$ be the Levi-Civita connection of $h^\mu.$ Note
that $h^{\mu}$ is a positive-definite metric when $\mu>0$ and has
signature $(4n,3)$ when $\mu<0$. The difference
$L=\nabla^{\mu}-\nabla$  between the Levi-Cevita connection
$\nabla^{\mu}$ and the Biquard connection $\nabla$ is given by
\cite{IMV1,IMV4}
\begin{equation}\label{nabla-lambda}
L(A,B)\equiv \nabla^{\mu}_{\A}\B-\nabla_{\A}\B=
\frac{S}{2}[\A]_V\times [\B]_V +
\sum_{s=1}^3\Big\{-\omega_s(\A,\B)\xi_s+\mu\eta_s(\A)I_s\B+\mu\eta_s(\B)I_s\A
\Big\},
\end{equation}
where $._V\times._V$ is the standard vector cross product on the 3-dimensional vertical space $V$.

\subsection{Quaternionic contact  hypersurfaces}\label{ss:qc-hypersurfaces}
In this section we summarize some results from \cite{IMV5} which are the starting point of the subject of the current paper. For ease of reading we follow \cite{IMV5} closely.

\subsubsection{qc-hypersurfaces}\label{ss:qc-hypersurf} Let $K$ be a  hyper-K\"ahler manifold with hyper-complex structure
$(J_1,J_2,J_3)$, quaternionic bundle $\mathcal{Q}^K$, and
hyper-K\"ahler metric  $G$. In particular, the Levi-Civita
connection $D$ is a torsion free connection on $K$
preserving $\mathcal{Q}^K$.

For a real hypersurface $M\subset K$ the maximal
$\mathcal{Q}^K$-invariant subspace  $TM$ is denoted by $H$ and refereed to as the horizontal   
distribtution. If $\iota: M\rightarrow K$ is the natural inclusion
map, then $M$ is a qc-hypersurface if it is a qc manifold with
respect to the induced quaternionic structure
$\iota_*^{-1}(Q^K)\iota_*$ on $H$. In order to simplify the
notation we shell identify the corresponding points and tensor
fields on $M$ with their images through $\iota$ in $K$. An
equivalent characterization of a qc-hypersurface $M$ is that the
restriction of the second fundamental form of $M$ to the
horizontal space $H$ is a definite symmetric form, which is
invariant with respect to the induced quaternion structure, see
\cite[Proposition~2.1]{D1}. After choosing the unit normal vector
$N$  to $M$ appropriately, we will assume that the second
fundamental form of $M$,  $$II(A,B) =-G(D_AN,B), \quad A,B\in
TM,$$ is negative definite on  the horizontal space $H$. The
defining tensors of the embedded qc structure on $M$ are  given by
\begin{equation}\label{e:def of hat str}
\hat\eta_s (A)= G(J_sN,A), \quad \hat \xi_s=J_s N +\hat r_s, \quad \hat\omega_s(X,Y)=-II(I_sX,Y), \quad \hat g(X,Y)= -\hat\omega_s(I_sX,Y),
\end{equation}
where  $I_s=J_s|_H$ and $\hat\xi_s$, are the Reeb vector fields corresponding to $\hat\eta_s$, see \cite[Section 2.2]{IMV5}.

\subsubsection{The calibrating function}

Let $\hat\omega_s$ be the fundamental 2-forms corresponding to $\hat\eta_s$,  given  by $2\hat\omega_s(X,Y)=d\hat\eta_s(X,Y),\ X,Y\in H$ and $\hat\xi_t\lrcorner\hat\omega_s=0,\ s,t=1,2,3.$ Following \cite[Section 3.1]{IMV5}, consider the complex 2-forms on $M$,
\begin{equation*}
\begin{aligned}
\hat\gamma_i=\hat\omega_j+\sqrt{-1}\ \hat\omega_k,\qquad
{\Gamma}_i(\A,\B)\ = \ G(J_j\A,\B)+\sqrt{-1}\, G(J_k\A,\B).
\end{aligned}
\end{equation*}
Using a type decomposition argument it was shown in \cite[Section 3.1]{IMV5} that
\begin{equation}\label{e:def pf mu}
{\Gamma}_s^n \equiv \mu_s \hat\gamma_s^n \mod \{\hat\eta_1,\hat\eta_2,\hat\eta_3\},
 \end{equation}
for $s=1,2,3$ and some complex valued functions $\mu_s$ and, in fact,
$\mu_1=\mu_2=\mu_3=\mu$ for a  positive (real valued)
function  $\mu$  on $M$. The \emph{calibrating} function of $M$ was
defined by
$$f=\mu^{\frac{1}{n+2}}.$$

\subsubsection{The calibrated qc structure}

The qc structure
\begin{equation}\nonumber
(\eta_1,\eta_2,\eta_3)\overset{def}{=}f(\hat\eta_1,\hat\eta_2,\hat\eta_3)
\end{equation}
is called \emph{calibrated}. As shown in \cite{IMV5}, it satisfies the structure equations \eqref{str_eq_mod}. In particular, it is a qc-Einstein structure. Moreover, by \cite[Lemma 3.9]{IMV5} the horizontal metric $g$ of the calibrated  qc structure is related to the second fundamental form of the qc-embedding by the formula
\begin{equation}\label{II-form}
g(\A '',\B '')=-fII(\A,\B)-\frac {S}{2}\sum_{s=1}^3\eta_s(\A)\eta_s(\B),\quad  \A,\B\in TM,
\end{equation}
where for $\A\in TM$ we let $\A ''=A-\sum_{s=1}^3\eta_s(\A)\xi_s$ be the horizontal part of $\A$. The corresponding Reeb vector fields $\xi_s$ are given  by
\begin{equation}\label{xi-JN}
\xi_s=J_s\Big(f^{-1}N+r\Big),
\end{equation}
where $r\in H$ is determined by $II(r,X)={f}^{-2}df(X),\ X\in H$. In fact, we have \cite[ Lemma 3.8]{IMV5}
\begin{align}\label{r}
& r=-f^{-1}\nabla f,\\\label{e:df xi_s}
& df(\xi_s)=0, \qquad s=1,2,3,
\end{align}
where $\nabla f\in H$ denotes the horizontal gradient of $f$,
$df(X)=g(\nabla f,X)$.

The  \emph{calibrated transversal} to $M$ vector field is defined by
\begin{equation}\label{e:cal transversal}
\xi{=}f^{-1}N+r.
\end{equation}
From \eqref{xi-JN} and \eqref{e:cal transversal} we have
 \begin{equation}\label{e:xi_s by xi}
 \xi_s =J_s \xi.
 \end{equation}

With the obvious identifications, the bundle $TK|_M\rightarrow M$  decomposes as a direct sum,
\begin{equation}\label{decompose}
TK|_M=H\oplus V\oplus{\mathbb R}\xi,
\end{equation}
where $V$ is the span of the Reeb vector fields $\xi_s$ of the calibrated qc structure on $M$. For  $v\in T_pK$ we define
 \begin{equation}\label{e:vector decomp '}
 v'=v-\lambda(v)\xi(p)\in T_p M=H_p\oplus V_p,\qquad v''=\pi v =v{'}-\sum_{s=1}^3\eta_s(v')\xi_s\in H_p,
 \end{equation}
  where  $\lambda=fG(N,.)$ so that $v'$ is the projection of $v$ on $T_p M=H_p\oplus V_p$ parallel to the calibrated transversal field $\xi$ and  $\pi:TK|_M\rightarrow H$ is the projection on the horizontal space using the decomposition \eqref{decompose}.
Thus, for $v\in TK|_M$ we have
\begin{equation}\label{e:lambda by eta}
\lambda(J_sv)=\eta_s(v')
\end{equation}
and the decomposition
\begin{equation}\label{e:vector decomp}
v=\pi v+ \sum_{s=1}^3\eta_s(v')\xi_s+\lambda(v)\xi\, \in H\oplus V\oplus{\mathbb R}\xi.
\end{equation}
Following \cite[(3.23)]{IMV5} consider the symmetric bilinear form $\mathfrak{W}\in T^*K|_M\otimes T^*K|_M,$
\begin{multline}\label{delta-def}
{\mathfrak{W}}(v,w)\overset{def}{=} -fII(v',w')+ \frac {S}{2}\lambda(v)\lambda(w)=
g(\pi v,\pi w)+ \frac {S}{2}\sum_{s=1}^3\eta_s(v')\eta_s(w') +
\frac {S}{2}\lambda(v)\lambda(w).
\end{multline}
Clearly, $\mathfrak{W}(J_s.,.J_s.)=\mathfrak{W}(.,.)$, $s=1,2,3,$ and 
$\mathfrak{W}$ as the unique
$J_s$-invariant extension of the symmetric bilinear form $-f II$
on $TM$ to a symmetric bilinear form on $TK|_M$. A very important
property of the calibrated qc structure is that $\mathfrak{W}$
is constant along $M$ with respect to the Levi-Civita connection
$D$ of the hyper-K\"ahler metric $G$, see  \cite[Theorem 3.1]{IMV5}), i.e.,
we have
\begin{equation}\label{parallel}
D_A\mathfrak{W}=0,\quad A\in TM.
\end{equation}

Finally, we record an important relation between the calibrating function and the parallel bilinear form, see \cite[(2.16)]{IMV5}
\begin{equation}\label{df}
{\mathfrak{W}}(N,\A)=-fII(N',\A)=f^2II(r,\A)=-fg(r,\A'')=df(\A'')=df(\A).
\end{equation}

\section{The system of differential equations for the calibrating function}

We begin with a lemma relating the
Levi-Civita connection $D$ of the hyper-K\"ahler metric $G$ to
the Biquard connection $\nabla$ of  the calibrated qc structure on
$M$.

\begin{lemma} \label{D-nabla}
For any $A\in TM$ and $X\in H$ we have:

\par i) $D_A X=\nabla_A X + \sum_{t=1}^3\Big((S/2)\eta_t(A) I_tX - \omega_t(\pi A,X)\xi_t\Big) - g(\pi A,X)\xi$.
\par ii) $D_A \xi = (S/2) A$ and $D_A \xi_s = (S/2) J_sA$.
\end{lemma}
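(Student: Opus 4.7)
The plan is to prove (ii) first using the $D$-parallelism of $\mathfrak W$ and of the hyper-complex structure $J_s$, and then to deduce (i) by decomposing $D_AX$ along $TK|_M=H\oplus V\oplus \mathbb{R}\xi$ and identifying each component via the same parallel objects, combined with the qc-Einstein identity \eqref{e:vanishing alphas qc-Einstein}.

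For (ii), the crucial observation is $\mathfrak{W}(\xi,C)=(S/2)\lambda(C)$, so $D\mathfrak W=0$ from \eqref{parallel} yields
\[
\mathfrak{W}(D_A\xi,C)\ =\ (S/2)(D_A\lambda)(C),\qquad A\in TM,\ C\in TK|_M.
\]
The right-hand side is computed directly from $\lambda=fG(N,\cdot)$ using the Weingarten formula $G(D_AN,B)=-II(A,B)$, \eqref{II-form}, and \eqref{e:df xi_s}: the evaluations on $H$, $V$, $\mathbb R\xi$ give, respectively, $(D_A\lambda)(X)=g(\pi A,X)$, $(D_A\lambda)(\xi_s)=(S/2)\eta_s(A)$ and $(D_A\lambda)(\xi)=0$. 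The diagonal form of $\mathfrak W$ on $H\oplus V\oplus\mathbb R\xi$ immediately returns $\pi(D_A\xi)=(S/2)\pi A$ and $\lambda(D_A\xi)=0$ in all cases, and $\eta_s(D_A\xi)=(S/2)\eta_s(A)$ whenever $S\neq 0$. Once $D_A\xi=(S/2)A$ is established, the second formula $D_A\xi_s=J_sD_A\xi=(S/2)J_sA$ follows from $DJ_s=0$.

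For (i), write $D_AX=\pi(D_AX)+\sum_s a_s\,\xi_s + c\,\xi$. The scalar $c$ is recovered from the $N$-component: Weingarten and \eqref{II-form} give $G(D_AX,N)=-f^{-1}g(\pi A,X)$, hence $c=-g(\pi A,X)$. For the $a_s$, I would extend $\eta_s=fG(J_sN,\cdot)$ to $TK|_M$ by the same formula (the check $\eta_s(\xi)=0$ shows the extension still picks out the $\xi_s$-coefficient of $D_AX$); differentiating $\eta_s(X)=0$ along $A$ and using $DJ_s=0$, Weingarten, and \eqref{II-form} produces $a_s=-\omega_s(\pi A,X)$ unconditionally. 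For the horizontal part, set $P_AX:=\pi(D_AX)-\nabla_AX$; then $\mathfrak W$-parallelism restricted to $H\times H$ combined with $\nabla g=0$ forces $P_A\in\mathfrak{so}(H,g)$, while $DJ_s=0$ together with \eqref{der} and \eqref{e:vanishing alphas qc-Einstein} gives $[P_A,I_s]=S\eta_k(A)I_j-S\eta_j(A)I_k$; this implies $P_A=P_A^0+(S/2)\sum_s\eta_s(A)I_s$ with $P_A^0$ commuting with every $I_s$ (hence $P_A^0\in\mathfrak{sp}(n)$). The residual tensor $P^0$ vanishes by torsion-freeness of $D$: taking $X,Y\in H$ in $D_XY-D_YX=[X,Y]$ and using the Biquard torsion identity $[X,Y]_V=-2\sum_s\omega_s(X,Y)\xi_s$ (a direct consequence of \eqref{str_eq_mod}) reduces to the symmetry $P_X^0Y=P_Y^0X$, and together with the skew-symmetry of $P^0$ the standard three-cycle argument yields $P^0|_H=0$; taking $A=\xi_s$ in $D_{\xi_s}X-D_X\xi_s=[\xi_s,X]$ and using the qc-Einstein identities $T(\xi_s,X)=0$ and $\nabla_X\xi_s=0$ gives $P^0_{\xi_s}=0$.

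The main obstacle is the $S=0$ sub-case of (ii), where $\mathfrak W$ becomes degenerate on $V\oplus\mathbb R\xi$ and the identity $\mathfrak W(D_A\xi,\xi_s)=(S/2)\eta_s(D_A\xi)$ becomes vacuous. I would bypass this using $DJ_s=0$: the equation $A\lambda(\xi_s)=0=(D_A\lambda)(\xi_s)+\lambda(D_A\xi_s)$ gives $\lambda(D_A\xi_s)=-(S/2)\eta_s(A)$, and writing $D_A\xi_s=J_sD_A\xi$ together with the $G$-skew-adjointness of $J_s$ produces $\lambda(D_A\xi_s)=-fG(J_sN,D_A\xi)=-\eta_s(D_A\xi)$ (the last step using $D_A\xi\in TM$, which itself follows from $(D_A\lambda)(\xi)=0$); comparing the two expressions forces $\eta_s(D_A\xi)=(S/2)\eta_s(A)$ for every value of $S$.
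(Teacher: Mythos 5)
Your proposal is correct, but it reaches the lemma by a genuinely different organization than the paper. The paper proves i) first for horizontal directions (showing the tensor $g(\nabla_XY-\pi(D_XY),Z)$ is symmetric in $X,Y$ and skew in $Y,Z$, hence zero, and computing the $\xi_s$- and $\xi$-components directly), then derives the explicit Weingarten-type formula $D_XN=\nabla_X\nabla f+\tfrac{Sf}{2}X-\sum_s df(I_sX)\xi_s$ from the horizontal Hessian of $f$ via $\mathfrak{W}(N,\cdot)=df$, obtains $D_X\xi=\tfrac{S}{2}X$ from it, and treats vertical directions through $\nabla_{\xi_s}X=[\xi_s,X]$ and $D_{\xi_s}N=\tfrac{S}{2}J_sN$. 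You instead prove ii) first, directly from $D\mathfrak{W}=0$ applied to the observation $\mathfrak{W}(\xi,\cdot)=\tfrac{S}{2}\lambda$ (which follows from \eqref{delta-def} since $\xi'=0$), together with the computation of $D_A\lambda$ from the Weingarten equation and \eqref{II-form}; this bypasses $D_XN$ and the Hessian of $f$ entirely, and your separate treatment of the degenerate case $S=0$ (using $DJ_s=0$, $\lambda(\xi_s)=0$, and $\lambda(D_A\xi)=0$, the latter from $(D_A\lambda)(\xi)=0$ and $\lambda(\xi)=1$ rather than from the "diagonal form" of $\mathfrak{W}$, which is vacuous there) is both necessary and handled correctly. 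For i), your $\xi$- and $\xi_s$-components agree with the paper's computation, while your horizontal component is obtained by a Lie-algebraic repackaging: the difference endomorphism $P_A$ is $g$-skew by parallelism of $\mathfrak{W}$ and $g$, its commutator with $I_s$ is fixed by \eqref{der} and \eqref{e:vanishing alphas qc-Einstein}, and the residual $\mathfrak{sp}(n)$-part is killed by torsion-freeness of $D$ for horizontal $A$ and by ii) together with $T(\xi_s,X)=0$, $\nabla_X\xi_s=0$ for vertical $A$ --- in substance the same two ingredients the paper uses, just phrased through commutation relations instead of the explicit frame computation. The trade-off is that the paper's route produces the intermediate formula for $D_XN$, which it reuses in Lemma \ref{Delta}, whereas your route gives a shorter and more uniform proof of ii); both arguments are sound and there is no circularity in your order of steps.
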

\begin{proof}
First we shall prove the formula in part \textit{i) } for a horizontal vector field $A$,
\begin{equation}\label{DXY}
D_XY
=\nabla_XY-\omega_s(X,Y)\xi_s-g(X,Y)\xi.
\end{equation}
We start with the computation of the horizontal part of $D_XY$,
\begin{equation}\label{piD}
\nabla_XY=\pi(D_XY),\qquad X,Y\in H,
\end{equation}
recalling that $\pi$ is the projection on the horizontal space, see \eqref{e:vector decomp}.
From \eqref{delta-def} and \eqref{parallel} we have
\begin{multline*}
0 = (D_X\mathfrak{W})(Y,Z)
=X\Big(\mathfrak{W}(Y,Z)\Big)-\mathfrak{W}(D_XY,Z)-\mathfrak{W}(Y,D_XZ)\\
=X\Big(g(Y,Z)\Big)-g\Big(\pi(D_XY),Z\Big)-g\Big(Y,\pi(D_XZ)\Big).
\end{multline*}
Letting $F(X,Y)\overset{def}=\nabla_XY-\pi(D_XY),$
we compute
\begin{multline*}
0 = (\nabla_X g)(Y,Z)
=X\Big(g(Y,Z)\Big)-g\Big(\pi(D_XY)+F(X,Y),Z\Big)-g\Big(Y,\pi(D_XZ)+F(X,Z)\Big)\\
=-g\Big(F(X,Y),Z\Big)-g\Big(F(X,Z),Y\Big),
\end{multline*}
while on the other hand
\begin{multline*}
0 = g\Big(\pi (T(X,Y)),Z\Big)=g\Big(\nabla_XY-\nabla_YX-\pi([X,Y]),Z\Big) = g\Big(\nabla_XY-\nabla_YX-\pi(D_XY-D_YX),Z\Big)\\
=g\Big(F(X,Y),Z\Big)-g\Big(F(Y,X),Z\Big).
\end{multline*}
Thus, the tensor $g\Big(F(X,Y),Z\Big)$  is both symmetric in $X,Y$ and skew-symmetric in $Y, Z$ which implies that it vanishes.

The remaining part of $D_XY$  in the decomposition based on  \eqref{e:vector decomp} can be computed easily as follows,
\begin{equation*}
\lambda(D_XY)
=-fG(D_XN,Y)=f II(X,Y)=-g(X,Y)
\end{equation*}
and
\begin{equation*}
\eta_s((D_XY)')=-\lambda(J_sD_X Y)=-\lambda(D_X (J_sY))=g(X,J_sY)=-\omega_s(X,Y).
\end{equation*}
From the above the formula in part \textit{i)}
in the case when $A$ is a horizontal vector field follows.

Next we prove the formula
\begin{equation}\label{DXN}
D_XN=\nabla_X\nabla f+\frac{Sf}{2}X-df(J_sX)\xi_s.
\end{equation}
In order to determine the horizontal part of $D_XN$ we recall \eqref{parallel} and then compute the (horizontal) Hessian of $f$ as follows
\begin{multline*}
\nabla^2 f(X,Y)= X(df(Y))-df(\nabla_XY)= X(\mathfrak{W}(N,Y))- df(\nabla_XY)\\
= \mathfrak{W}(D_XN,Y)+ \mathfrak{W}(N,D_X Y)-df(\nabla_XY)=\mathfrak{W}(D_XN,Y)+ \mathfrak{W}(N,D_X Y-\nabla_XY)
\end{multline*}
using \eqref{df} in the last equality. From \eqref{delta-def} and \eqref{DXY} it follows
\[
\nabla^2 f(X,Y)= g(\pi(D_X N),Y)-\frac {1}{2}fS g(X,Y)
\]
noting that $\mathfrak{W}(N,\xi)=\frac 12 fS$.
The vertical part of $D_X N$ is computed with the the help of \eqref{II-form} and \eqref{e:cal transversal}
\[
\eta_s(D_X N)=-f G\Big(N,J_s(D_X N),\Big)=f G\Big(D_X N,J_sN\Big)=-f II(X,J_sN)
=-df(J_s X).
\]
The proof of formula \eqref{DXN} is complete.

An immediate consequence of \eqref{e:cal transversal}, \eqref{r}, \eqref{DXY} and \eqref{DXN} is the following formula
\begin{equation}\label{DXxi}
D_X\xi\overset{\eqref{xi-JN}}=\frac{1}{2}S X.
\end{equation}
At this point we can complete the proof of part \textit{i)}.
Since the calibrated qc structure is qc-Einstein and  the 1-forms $\eta_s$ satisfy the structure equations \eqref{str_eq_mod}, we have  $\nabla_{\xi_s} X=[\xi_s,X]$. Therefore,
\begin{equation}\label{DxiX}
\nabla_{\xi_s} X=[\xi,X]=D_{\xi_s}X-D_X\xi_s
=D_{\xi_s}X-J_s(D_X\xi)\overset{\eqref{DXxi}}=D_{\xi_s}X-\frac{S}{2}J_sX.
\end{equation}
Finally, we compute
\begin{multline*}
D_AX=D_{\pi A}X+\eta_s(A)D_{\xi_s}X\overset{\eqref{DXY},\eqref{DxiX}}=\nabla_{\pi A}X-\omega_s(\pi A,X)\xi_s-g(\pi A,X)\xi +\eta_s(A)\Big(\nabla_{\xi_s}X+\frac{S}{2}J_sX\Big)\\
=\nabla_A X-\omega_s(\pi A,X)\xi_s -g(\pi A,X)\xi+\frac{S}{2}\eta_s(A)J_sX.
\end{multline*}

Turning to the proof of \textit{ii)}, we have from \eqref{II-form} and \eqref{e:lambda by eta} the formula
\[
G(D_{\xi_s}
N,A)=-II(\xi_s,A)=\frac{1}{2}f S\eta_s(A)=\frac{S}{2}G(J_sN,A),
\]
hence
\begin{equation}\label{DxiN}
 D_{\xi_s} N = \frac{1}{2}S J_s N=\frac{1}{2}S\xi_s-\frac 12S J_s\nabla r.
\end{equation}

From \eqref{e:df xi_s} and $T(\xi_s,X)=0$ it follows $\nabla_{\xi_s}\nabla f=0$, hence
\eqref{e:cal transversal}, \eqref{r}, \eqref{DxiN} and \eqref{xi-JN}  give
\[
D_{\xi_s}\xi=\frac{S}{2}\xi_s,
\]
which together with \eqref{DXxi} completes the proof of part \textit{ii}) after recalling \eqref{e:xi_s by xi}.
\end{proof}

\begin{cor}\label{l:umbilic}
$M$ is a totally umbilical  qc-hypersurface of a hyper-K\"ahler
manifold iff the calibrating function is locally constant.
\end{cor}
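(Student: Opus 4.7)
The plan is to establish the two implications separately, using Lemma \ref{D-nabla} together with the structural formulas \eqref{II-form}, \eqref{xi-JN}, \eqref{r}, and \eqref{e:cal transversal} from the preliminaries.

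For the implication ``$f$ locally constant $\Rightarrow$ $M$ totally umbilical'', I would first note that a locally constant $f$ gives $\nabla f=0$, so \eqref{r} and \eqref{e:cal transversal} yield $r=0$ and $\xi=f^{-1}N$. Applying Lemma \ref{D-nabla}(ii) to any $A\in TM$ then gives $f^{-1}D_A N = D_A\xi = (S/2)A$, so $D_A N=(Sf/2)A$, and consequently $II(A,B)=-G(D_A N,B)=-(Sf/2)\,G(A,B)$. This exhibits $M$ as totally umbilical with umbilicity factor $-Sf/2$.

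For the converse I would assume $II=c\, G|_M$ for some function $c$ on $M$. Restricting \eqref{II-form} to $H\times H$ gives $II|_H=-f^{-1}g$, which is negative definite since $f>0$ and $g$ is positive definite; as $G|_H$ is positive definite, the identity $-f^{-1}g=c\,G|_H$ forces $c$ to be strictly negative and in particular nowhere zero. Next, I would evaluate \eqref{II-form} on a horizontal vector $X$ and a Reeb vector $\xi_s$: since $\xi_s''=0$ and $\eta_t(X)=0$, this yields $II(X,\xi_s)=0$, and the umbilicity assumption combined with $c\neq 0$ then forces $G(X,\xi_s)=0$ for all $X\in H$ and $s=1,2,3$. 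Expanding $\xi_s=f^{-1}J_s N+J_s r$ via \eqref{xi-JN} and using that $J_s X\in H\subset TM$ is $G$-orthogonal to $N$, the equation $G(X,\xi_s)=0$ reduces to $G(X,I_s r)=0$ for every $X\in H$. Taking $X=I_s r$ and invoking positive definiteness of $G|_H$ gives $I_s r=0$, hence $r=0$ and $\nabla f=0$. Combined with \eqref{e:df xi_s}, this shows $df\equiv 0$ on $TM$, i.e., $f$ is locally constant.

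I do not anticipate a significant obstacle: both implications reduce to direct applications of Lemma \ref{D-nabla} and the structural identities collected in Section \ref{ss:qc-hypersurfaces}. The only mildly delicate point will be ruling out the degenerate case $c\equiv 0$ in the forward direction, which is handled simply by observing that $II|_H=-f^{-1}g$ cannot vanish since $g$ is positive definite.
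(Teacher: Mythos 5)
Your proof is correct and follows essentially the same route as the paper's: the umbilical-implies-constant direction rests on the vanishing of the mixed horizontal--vertical part of $II$ (you extract it from \eqref{II-form}, the paper from \eqref{DxiN}) together with \eqref{r} and the nonvanishing of the umbilicity factor coming from the definiteness of $II|_H$, while the converse is the immediate consequence of Lemma \ref{D-nabla}(ii) that the paper leaves implicit. The only cosmetic difference is that you conclude local constancy from $\nabla f=0$ via \eqref{e:df xi_s} rather than via the bracket-generating property of $H$ invoked in the paper.
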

\begin{proof}
In view of \eqref{DxiN} and \eqref{r} it follows the horizontal gradient of $f$ vanishes $\nabla f=0$, hence $f$ is locally constant taking into account that the horizontal space is bracket generating.
\end{proof}

As customary,  let ${\mathfrak{W}}:M\rightarrow End(TK)|_M$ also denote the (1,1) tensor corresponding to the symmetric  bilinear form $\mathfrak{W}$, i.e.,  $G({\mathfrak{W}}u,v)=\mathfrak{W}(u,v)$ for all $u,v\in TK|_M$.
Then ${\mathfrak{W}} J_s=J_s {\mathfrak{W}}$ and, since both $G$ and
$\mathfrak{W}$ are $D$-parallel along $M$, we also have
\begin{equation}\label{D-par}
(D_A{\mathfrak{W}})(u)=0,\qquad A\in TM,\ u\in TK|_M.
\end{equation}

An almost immediate corollary of the proof of Lemma \ref{D-nabla} is the following formula for $\mathfrak{W}$ in terms of the calibrating function.
\begin{lemma}\label{Delta} For $X\in H$ we have:
\par i) ${\mathfrak{W}}X=f\nabla_X\nabla f+(Sf^2/2) X + df(X) \nabla f - f\sum_{s=1}^3df(I_sX)\xi_s+fdf(X)\xi;$
\par ii) ${\mathfrak{W}}\xi=(Sf/2)\nabla f+(Sf^2/2)\xi;$
\par iii) ${\mathfrak{W}}\xi_s=(Sf/2)I_s\nabla f+(Sf^2/2)\xi_s,$ $s=1,2,3$.
\end{lemma}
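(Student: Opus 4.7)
The plan is to read off $\mathfrak{W}u$ from the identity $G(\mathfrak{W}u,v) = \mathfrak{W}(u,v)$ using the explicit formula \eqref{delta-def} for $\mathfrak{W}(\cdot,\cdot)$, specialized to $u = \xi$, $\xi_s$, and $X \in H$. All three formulas will reduce to identities already extracted during the proof of Lemma \ref{D-nabla}, together with the observation $N = f\xi + \nabla f$ obtained by merging \eqref{e:cal transversal} with \eqref{r}.

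For \textit{ii)}, the essential point is that $\lambda(\xi) = fG(N,\xi) = 1$ by \eqref{e:cal transversal}, so $\xi' = \xi - \lambda(\xi)\xi = 0$ and the $II$-term in \eqref{delta-def} drops out. This gives at once $\mathfrak{W}(\xi,v) = (Sf/2)G(N,v)$ for every $v \in TK|_M$, equivalently $\mathfrak{W}\xi = (Sf/2)N$; substituting $N = f\xi + \nabla f$ yields the stated form. Part \textit{iii)} is then a one-line consequence of the $J_s$-equivariance $\mathfrak{W}J_s = J_s\mathfrak{W}$ and $\xi_s = J_s\xi$: applying $J_s$ to $\mathfrak{W}\xi = (Sf/2)N$ and using $J_sN = f\xi_s + I_s\nabla f$ (obtained by applying $J_s$ to $N = f\xi + \nabla f$ and recalling that $J_s|_H = I_s$ with $\nabla f \in H$) produces the claim.

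For \textit{i)} fix $X \in H$, so that $\lambda(X) = 0$ and \eqref{delta-def} reduces to $\mathfrak{W}(X,v) = -fII(X,v') = fG(D_XN, v')$. Substituting $v' = v - fG(N,v)\xi$, the entire proof hinges on evaluating $G(D_XN,\xi)$. Since $G(N,N) = 1$ along $M$ one has $G(D_XN,N) = 0$, and the defining property $II(r,X) = f^{-2}df(X)$ of $r$ (stated right after \eqref{xi-JN}) gives $G(D_XN,r) = -II(X,r) = -f^{-2}df(X)$. Combining these with $\xi = f^{-1}N + r$ yields
\[
G(D_XN,\xi) = f^{-1}G(D_XN,N) + G(D_XN,r) = -f^{-2}df(X),
\]
and a short rearrangement collapses the expression for $\mathfrak{W}(X,v)$ to the compact identity $\mathfrak{W}X = fD_XN + df(X)\,N$. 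Inserting \eqref{DXN} for $D_XN$ and rewriting $N$ as $f\xi + \nabla f$ produces the formula in \textit{i)} term by term.

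The whole argument is essentially bookkeeping; the only nontrivial recognition is that, once $-fII$ has been extended to $TK|_M$ in the $\xi$-direction, $\mathfrak{W}X$ collapses to the clean identity $fD_XN + df(X)N$. Everything else is substitution of formulas already established during the proof of Lemma \ref{D-nabla}.
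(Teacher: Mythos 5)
Your proposal is correct and follows essentially the paper's own route: read $\mathfrak{W}$ off the defining bilinear form \eqref{delta-def}, use $II(A,B)=-G(D_AN,B)$ together with \eqref{DXN}, \eqref{e:cal transversal}, \eqref{r} for part \textit{i)}, and get \textit{iii)} from $\mathfrak{W}J_s=J_s\mathfrak{W}$ and $\xi_s=J_s\xi$. The only (harmless) variation is part \textit{ii)}, which you obtain purely algebraically from $\xi'=0$ in \eqref{delta-def} instead of invoking \eqref{DxiN} as the paper does, and your intermediate identity $\mathfrak{W}X=fD_XN+df(X)N$ is a correct repackaging of the paper's substitution step.
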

\begin{proof}
By definition \eqref{delta-def}, recall also \eqref{e:vector decomp '}, we have
\begin{multline*}
\mathfrak{W}(X,u)=-f II (X,u')= f G(D_XN,u')=fG(D_XN,u)-fG(D_XN,\xi)\lambda(u)\\
= f G(D_XN,u')=fG(D_XN,u)-f^2G(D_XN,\xi)G(N,u).
\end{multline*}
Now, the formula of part\textit{ i)}  follows by a direct substitution using \eqref{e:cal transversal}, \eqref{r} and \eqref{DXN}. Finally, part iii) follows from $J_s\xi=\xi_s$, see after equation \eqref{e:cal transversal}.

Part{\textit{ii)} is proved similarly with the help of \eqref{DxiN} instead of \eqref{DXN}}.
\end{proof}

After the preceding technical lemmas we turn to the key result which gives a system of partial differential equations for the calibrating function. With the help of \eqref{nabla-lambda}  is then expressed in terms of Levi-Civita connection in the subsequent lemma.
\begin{lemma}\label{l:key lemma}
The function $\phi\overset{def}=\frac{1}{2}f^2$ satisfies the following equations
\begin{align}\label{eqbi3}
& d\phi(\xi_s)=0;\\\label{eqbi2}
&\nabla^2\phi(X,Y)=\nabla^2\phi(I_sX,I_sY);
\\\label{eqbi1}
& \nabla^3\phi(X,Y,Z)+Sd\phi(X)g(Y,Z)+\frac{S}2d\phi(Y)g(Z,X)
              +\frac{S}2d\phi(Z)g(X,Y)\\\nonumber
&\hskip3in=\frac{S}2\sum_{s=1}^3\Big[d\phi(I_sY)\omega_s(X,Z)
                              +d\phi(I_sZ)\omega_s(X,Y)\Big].
\end{align}
\end{lemma}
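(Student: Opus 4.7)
The plan is to derive the three identities from Lemma \ref{Delta} together with the $D$-parallelism \eqref{D-par} of $\mathfrak{W}$ and the Biquard-to-Levi-Civita comparison of Lemma \ref{D-nabla}. The first equation $d\phi(\xi_s) = 0$ is immediate: writing $d\phi = f\,df$ and invoking \eqref{e:df xi_s} gives $d\phi(\xi_s) = f\,df(\xi_s) = 0$.

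For the second equation I would first read off from Lemma \ref{Delta}(i) the horizontal component $\pi(\mathfrak{W}X) = f\nabla_X\nabla f + (Sf^2/2)X + df(X)\nabla f$. Since $\nabla\phi = f\nabla f$ is horizontal and the Biquard connection preserves $H$, one has $\nabla_X\nabla\phi = df(X)\nabla f + f\nabla_X\nabla f$, and combining these yields the clean formula
\[
\nabla^2\phi(X,Y) = g\bigl(\pi(\mathfrak{W}X),Y\bigr) - S\phi\,g(X,Y), \qquad X,Y\in H.
\]
The $I_s$-invariance \eqref{eqbi2} now follows because $\mathfrak{W}$ commutes with each $J_s$, while $\pi\circ J_s = I_s\circ\pi$ on $TK|_M$ (since $J_s$ preserves $H$ and merely permutes the four vectors $\xi,\xi_1,\xi_2,\xi_3$, each lying in $\ker\pi$). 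Hence $\pi(\mathfrak{W}(I_sX)) = I_s\pi(\mathfrak{W}X)$, and the $I_s$-invariance of $g$ delivers the identity.

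For the third-order equation \eqref{eqbi1} I would compute $\nabla^3\phi(X,Y,Z) = X\bigl(\nabla^2\phi(Y,Z)\bigr) - \nabla^2\phi(\nabla_XY,Z) - \nabla^2\phi(Y,\nabla_XZ)$ by differentiating the formula above. The crucial step is evaluating $\nabla_X\pi(\mathfrak{W}Y)$. Here I would exploit $D_X(\mathfrak{W}Y) = \mathfrak{W}(D_XY)$ from \eqref{D-par}, expand $D_XY$ via \eqref{DXY}, substitute the expressions for $\mathfrak{W}\xi$ and $\mathfrak{W}\xi_t$ from Lemma \ref{Delta}(ii,iii), and handle the Levi-Civita derivatives of the $\xi_s$- and $\xi$-components of $\pi(\mathfrak{W}Y) = \mathfrak{W}Y + f\sum_s df(I_sY)\xi_s - f\,df(Y)\xi$ by means of $D_X\xi = (S/2)X$ and $D_X\xi_s = (S/2)J_sX$ from Lemma \ref{D-nabla}(ii). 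Taking horizontal projections and using $\nabla_AZ = \pi(D_AZ)$ for horizontal $A,Z$, the term $\pi\mathfrak{W}(\nabla_XY)$ that appears cancels against $\nabla^2\phi(\nabla_XY,Z)$ modulo $S\phi\,g$-corrections. The remaining terms involve only $g$, $\omega_s$ and $d\phi$; after absorbing the factor $f$ via $f\,df = d\phi$ and re-indexing the sums (using $\omega_s(U,V) = g(I_sU,V)$ and the skew-symmetry of $I_s$ with respect to $g$), one recovers precisely \eqref{eqbi1}.

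The main technical obstacle is the bookkeeping in this last step, where one must correctly combine the contributions arising from $D_X\xi_s$, $D_X\xi$, and $D_XY$, as well as from Lemma \ref{Delta}(ii,iii) expanded against each of $\omega_t(X,Y)\xi_t$ and $g(X,Y)\xi$. That several vertical-derivative terms drop out after projection by $\pi$, thanks to \eqref{e:df xi_s}, is what keeps the computation manageable.
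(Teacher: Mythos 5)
Your proposal is correct and follows essentially the same route as the paper: it establishes \eqref{eqbi3} from $df(\xi_s)=0$, derives the identity $\nabla^2\phi(X,Y)=g\bigl(\pi(\mathfrak{W}X),Y\bigr)-S\phi\,g(X,Y)$ from Lemma \ref{Delta} and uses $\mathfrak{W}J_s=J_s\mathfrak{W}$ with $\pi\circ J_s=I_s\circ\pi$ for \eqref{eqbi2}, and then differentiates this identity using \eqref{D-par}, \eqref{DXY} and Lemma \ref{D-nabla}(ii) together with Lemma \ref{Delta}(ii,iii), exactly as in the paper's proof of \eqref{eqbi1}. The remaining bookkeeping you defer is the same routine substitution the paper carries out, so there is no gap.
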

\begin{proof}
Since $
d\phi = fdf$ and $\nabla^2\phi = f\nabla^2f+df\otimes df,
$ 
  \eqref{df} gives $\eqref{eqbi3}$. Recalling the decomposition  \eqref{e:vector decomp}, see also \eqref{e:vector decomp '}, by
Lemma~\ref{Delta} we have
\begin{equation}\label{n2phi}
g\Big(({\mathfrak{W}}X)'',Y\Big)=\nabla^2\phi(X,Y)+S\phi g(X,Y).
\end{equation}
From ${\mathfrak{W}} J_s= J_s {\mathfrak{W}}$ and $g(I_sX,I_sY)=g(X,Y)$ for
 $s=1,2,3$, \eqref{eqbi2} follows from
\begin{equation*}
0=g\Big(({\mathfrak{W}}X)'',Y\Big)-g\Big(({\mathfrak{W}}I_sX)'',I_sY\Big)
=\nabla^2\phi(X,Y)-\nabla^2\phi(I_sX,I_sY).
\end{equation*}
We turn to the proof of \eqref{eqbi1}.  A differentiation of \eqref{n2phi}  gives
\begin{equation}\label{e:nabla^3 phi}
\nabla^3\phi(X,Y,Z)+Sd\phi(X)g(Y,Z)
=g\left (\nabla_X\left({\mathfrak{W}}Y \right)'',Z \right)-g\left(\pi {\mathfrak{W}}\nabla_XY ,Z \right).
\end{equation}
Taking into account \eqref{piD},  \eqref{e:vector decomp} and  \eqref{D-par} we can rewrite the first term in the right-hand side of the above identity as follows
\begin{multline*}
g\left (\nabla_X\left({\mathfrak{W}}Y \right)'',Z \right)=g\left (\pi D_X\left({\mathfrak{W}}Y \right)'',Z \right)
=g\left (\pi WD_X Y ,Z \right)- \eta_s\left( \left({\mathfrak{W}}Y\right)'\right)g\left (\pi D_X\xi_s ,Z \right)-\lambda({\mathfrak{W}}Y)g\left (\pi D_X\xi ,Z \right).
\end{multline*}
Now we use Lemma \ref{Delta} to compute
\[
\eta_s\left( \left({\mathfrak{W}}Y\right)'\right)=-fdf(I_sY)\ \text{ and }\
\lambda({\mathfrak{W}}Y)=fdf(Y).
\]
A substitution of the last two equations in \eqref{e:nabla^3 phi} gives
\begin{multline*}
\nabla^3\phi(X,Y,Z)+Sd\phi(X)g(Y,Z)
=g\left(\pi {\mathfrak{W}} \left(D_X Y-\nabla_XY \right),Z \right)+\frac{Sf}{2}\omega_s(X,Z)df(I_sY)-\frac{Sf}{2}g(X,Z)df(Y)\\
=\frac{S}{2}\Big(\omega_s(X,Y)d\phi(I_sZ)
          -g(X,Y)d\phi(Z)+\omega_s(X,Z)d\phi(I_sY)-g(X,Z)d\phi(Y)\Big)
\end{multline*}
using Lemma \ref{D-nabla} and Lemma \ref{Delta} in the last equality.
The proof of Lemma \ref{l:key lemma} is complete.
\end{proof}

We continue with our main
technical result, which allows the partial reduction to a Riemannian geometry problem.
\begin{prop}
Let $(M,\eta,Q)$ be a (4n+3)-dimensional qc-Einstein space with
constant qc-scalar curvature $S\not=0$ and $\phi$ be a smooth
function which satisfies identities  \eqref{eqbi3} , \eqref{eqbi2} and \eqref{eqbi1}. With respect
to the Levi-Civita connection $\nabla^S$ of the (pseudo)
Riemannian metric given by \eqref{mh} for $\mu=\frac{S}2$, the
function $\phi$ satisfies the following identity
\begin{equation}\label{eqlv1}
(\nabla^S)^3\phi(A,B,C)+Sd\phi(A)h^S(B,C)+\frac{S}2d\phi(B)h^S(C,A)+\frac{S}2d\phi(C)h^S(A,B)=0,\quad A,B,C\in \Gamma(TM).
\end{equation}
\end{prop}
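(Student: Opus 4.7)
I would use the difference tensor $L:=\nabla^S-\nabla$ from \eqref{nabla-lambda} (with $\mu=S/2$) to express $(\nabla^S)^3\phi$ in terms of $\nabla^3\phi$ plus $L$-corrections, and then reduce \eqref{eqlv1} to the hypotheses \eqref{eqbi3}--\eqref{eqbi1} via a case analysis on whether the entries of $(A,B,C)$ are horizontal or Reeb. The preparatory facts to extract first are: the qc-Einstein assumption gives $\alpha_s=-S\eta_s$ and $T(\xi_s,X)=0$, so $\nabla_X\xi_s=0$ for $X\in H$; together with $d\phi(\xi_s)=0$ this forces $\nabla^2\phi(A,\xi_s)=\nabla^2\phi(\xi_s,A)=0$ for every $A$, equivalently $\nabla_{\xi_s}\nabla\phi=0$. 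The horizontal Hessian $\nabla^2\phi|_{H\times H}$ is symmetric (since $T(X,Y)\in V$ and $d\phi|_V=0$) and $I_s$-hermitian by \eqref{eqbi2}; in particular $\nabla^2\phi(I_sY,Z)+\nabla^2\phi(Y,I_sZ)=0$. Finally, since $d\phi$ kills $V$, the correction reduces to $d\phi(L(A,B))=\tfrac{S}{2}\sum_s[\eta_s(A)d\phi(I_sB)+\eta_s(B)d\phi(I_sA)]$.

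From $(\nabla^S)^2\phi(A,B)=\nabla^2\phi(A,B)-d\phi(L(A,B))$ one differentiates again to get
\[
(\nabla^S)^3\phi(A,B,C)=\nabla^3\phi(A,B,C)-(\nabla_A\Psi)(B,C)-(\nabla^S)^2\phi(L(A,B),C)-(\nabla^S)^2\phi(B,L(A,C))
\]
with $\Psi(B,C):=d\phi(L(B,C))$. When $A,B,C\in H$ the corrections vanish because $L(X,Y)\in V$ is killed by $d\phi$; substituting \eqref{eqbi1} and using $h^S|_H=g$ gives \eqref{eqlv1} on $H\times H\times H$. For $C=\xi_s$ and $A=X,B=Y\in H$ the right-hand side of \eqref{eqlv1} is zero, and a direct expansion using $(\nabla^S)^2\phi(Y,\xi_s)=-\tfrac{S}{2}d\phi(I_sY)$, $\nabla^S_XY=\nabla_XY-\sum_t\omega_t(X,Y)\xi_t$, $\nabla^S_X\xi_s=\tfrac{S}{2}I_sX$, and \eqref{eqbi2}, reduces $(\nabla^S)^3\phi(X,Y,\xi_s)$ to $\tfrac{S}{2}\bigl[\nabla^2\phi(I_sX,Y)-\nabla^2\phi(Y,I_sX)\bigr]=0$. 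The cases with two or three vertical arguments are handled similarly: $(\nabla^S)^2\phi(\xi_s,\xi_t)=0$ wipes out most terms, and the only nontrivial balance is $(\nabla^S)^3\phi(\xi_s,\xi_s,Z)=-\tfrac{S^2}{4}d\phi(Z)$, which cancels the $\tfrac{S^2}{4}d\phi(Z)\,h^S(\xi_s,\xi_s)$ contribution.

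The main obstacle is the case $A=\xi_s$, $B=Y,C=Z\in H$: here the $I_s$-hermitian identity absorbs the $L$-correction and one is reduced to showing $\nabla^3\phi(\xi_s,Y,Z)=0$. Using $\nabla_{\xi_s}\nabla\phi=0$ and the Biquard Ricci identity one obtains $\nabla^3\phi(\xi_s,Y,Z)=g(R(\xi_s,Y)\nabla\phi,Z)$. The first Bianchi identity for the Biquard connection in the qc-Einstein setting (where $T(\xi_s,\cdot)=0$, $(\nabla_YT)(Z,\xi_s)=0$, and $R(Y,Z)\xi_s\in V$), projected on $H$, yields the symmetry $R(\xi_s,Y)Z=R(\xi_s,Z)Y$ on $H$. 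Combined with the $g$-antisymmetry of $R(\xi_s,A):H\to H$ (a direct consequence of $\nabla g|_H=0$),
\[
\nabla^3\phi(\xi_s,Y,Z)=g(R(\xi_s,Y)\nabla\phi,Z)=g(R(\xi_s,\nabla\phi)Y,Z)=-g(Y,R(\xi_s,\nabla\phi)Z)=-g(Y,R(\xi_s,Z)\nabla\phi)=-\nabla^3\phi(\xi_s,Z,Y),
\]
while symmetry of the horizontal Hessian gives $\nabla^3\phi(\xi_s,Y,Z)=\nabla^3\phi(\xi_s,Z,Y)$. The two together force $\nabla^3\phi(\xi_s,Y,Z)=0$. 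This symmetric/antisymmetric collapse, which hinges on the qc-Einstein Bianchi symmetry of the Biquard curvature, is the delicate point of the argument; the rest is explicit computation.
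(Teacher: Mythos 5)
Your overall route is the same as the paper's: pass to the difference tensor $L=\nabla^S-\nabla$ from \eqref{nabla-lambda}, compute $(\nabla^S)^2\phi$ on mixed slots, and verify \eqref{eqlv1} case by case according to horizontal/vertical entries. Your treatment of the case $(\xi_s,Y,Z)$ is actually more self-contained than the paper's: in \eqref{lc5} the paper simply inserts $\nabla^3\phi(\xi_s,Y,Z)=0$, which rests on the known qc-Einstein curvature identity $R(\xi_s,X,Y,Z)=0$ quoted only later as \eqref{rb2}, whereas you derive the needed vanishing from the Ricci identity plus the first Bianchi identity and the symmetric/antisymmetric collapse; that argument checks out ($T(\xi_s,\cdot)=0$, the cyclic torsion terms being vertical, and $R(Y,Z)\xi_s\in V$ do give $R(\xi_s,Y)Z=R(\xi_s,Z)Y$ on $H$, and $\nabla_{\xi_s}$ of the symmetric horizontal Hessian is symmetric).

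There is, however, a concrete error in your horizontal case: the corrections do \emph{not} vanish when $A,B,C\in H$. Only the term $(\nabla_X\Psi)(Y,Z)$ dies because $d\phi$ kills $V$; the terms $-(\nabla^S)^2\phi(L(X,Y),Z)-(\nabla^S)^2\phi(Y,L(X,Z))$ survive, since $L(X,Y)=-\sum_s\omega_s(X,Y)\xi_s$ and, by your own preparatory identity, $(\nabla^S)^2\phi(\xi_s,Z)=-\tfrac{S}{2}d\phi(I_sZ)\neq0$. These corrections equal $-\tfrac{S}{2}\sum_s\bigl[\omega_s(X,Y)d\phi(I_sZ)+\omega_s(X,Z)d\phi(I_sY)\bigr]$, and they are precisely what cancels the $\omega_s$-terms on the right-hand side of \eqref{eqbi1}; this is the content of the paper's computation \eqref{lc4}. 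If the corrections really vanished, substituting \eqref{eqbi1} would leave exactly those $\omega_s$-terms uncancelled and \eqref{eqlv1} would fail on $H\times H\times H$. The repair is immediate with the identities you already listed, but as written that step does not go through. A smaller imprecision: besides $(\nabla^S)^3\phi(\xi_s,\xi_s,Z)=-\tfrac{S^2}{4}d\phi(Z)$ you also need the distinct balance $(\nabla^S)^3\phi(Z,\xi_s,\xi_s)=-\tfrac{S^2}{2}d\phi(Z)$, matching the coefficient $S$ attached to the first slot in \eqref{eqlv1}.
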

\begin{proof}
From \eqref{eqbi3}, the properties of the Biquard connection, the Ricci identities, the vanishing of the torsion of the Biquard connection  and the integrability of the vertical space we have  the following  equalities
\begin{eqnarray}\label{verhes}
 0=\nabla^2\phi(X,\xi_s)=\nabla^2\phi(\xi_s,X)=\nabla^2\phi(\xi_s,\xi_t), \\ \nabla^2\phi(X,Y)-\nabla^2\phi(Y,X)=2\sum_{s=1}^3\omega_s(X,Y)d\phi(\xi_s)=0.
\end{eqnarray}
Next, using the equality \eqref{nabla-lambda} together with the Ricci identities for the Levi-Civita connection, \eqref{verhes} gives the identities
\begin{eqnarray}\label{lc1}
(\nabla^S)^2\phi(Y,X)=(\nabla^S)^2\phi(X,Y)=\nabla^2\phi(X,Y)-d\phi(L(X,Y))=\nabla^2\phi(X,Y).\\\label{lc2} (\nabla^S)^2\phi(X,\xi_s)=(\nabla^S)^2\phi(\xi_s,X)=\nabla^2\phi(\xi_s,X)-d\phi(L(\xi_s,X)=-Sd\phi(I_sX);\\\label{lc3} (\nabla^S)^2\phi(\xi_s,\xi_t)=(\nabla^S)^2\phi(\xi_t,\xi_s)=\nabla^2\phi(\xi_s,\xi_t)-d\phi(L(\xi_s,\xi_t)=0.
\end{eqnarray}
Now we turn to the computation of the third derivative. Using \eqref{lc1} and \eqref{lc2} we obtain the identities
\begin{multline}\label{lc4}
(\nabla^S)^3\phi(X,Y,Z)=\nabla^3\phi(X,Y,Z)-(\nabla^S)^2\phi(L(X,Y),Z)-(\nabla^S)^2\phi(Y,L(X,Z))\\=\nabla^3\phi(X,Y,Z)+\sum_{s=1}^3\Big[\omega_s(X,Y)(\nabla^S)^2\phi(\xi_s,Z)+\omega_s(X,Z)(\nabla^S)^2\phi(Y,\xi_s)\Big]\\=\nabla^3\phi(X,Y,Z)-\frac{S}2\sum_{s=1}^3\Big[\omega_s(X,Y)d\phi(I_sZ)+\omega_s(X,Z)d\phi(I_sY)\Big]\\=Sdf(X)g(Y,Z)+\frac{S}2df(Y)g(Z,X)+\frac{S}2df(Z)g(X,Y),
\end{multline}
where we used \eqref{eqbi1} in the last equality. Proceeding in the same fashion, we obtain
\begin{multline}\label{lc5}
(\nabla^S)^3\phi(\xi_s,Y,Z)=\nabla^3\phi(\xi_s,Y,Z)-(\nabla^S)^2\phi(L(\xi_s,Y),Z)-\frac{S}2(\nabla^S)^2\phi(Y,L(\xi_s,Z))\\=0 -\frac{S}2(\nabla^S)^2\phi(I_sY,Z)-(\nabla^S)^2\phi(Y,I_sZ)=-\frac{S}2\nabla^2\phi(I_sY,Z)-\frac{S}2\nabla^2\phi(Y,I_sZ)=0,
\end{multline}
where we used \eqref{eqbi2} in the last equality. A similar computation shows
\begin{multline}\label{lc6}
(\nabla^S)^3\phi(Y,Z,\xi_s)=(\nabla^S)^3\phi(Y,\xi_s,Z)\\=\nabla(\nabla^S)^2(Y,\xi,Z)-(\nabla^S)^2\phi(L(Y,\xi_s),Z)-S(\nabla^S)^2\phi(\xi,L(Y,Z))\\=-\frac{S}2\nabla^2\phi(Y,I_sZ) -\frac{S}2(\nabla^S)^2\phi(I_sY,Z)=-\frac{S}2\nabla^2\phi(I_sY,Z)-\frac{S}2\nabla^2\phi(Y,I_sZ)=0,
\end{multline}
where we used \eqref{eqbi2} in  the last equality, and also
\begin{gather}\label{lc7}
(\nabla^S)^3\phi(Y,\xi_s,\xi_s)=\nabla^3\phi(Y,\xi_s,\xi_s)-2(\nabla^S)^2\phi(L(Y,\xi_s)\xi_s)=-2\nabla^2\phi(I_sY,\xi_s)=-Sd\phi(Y);\\\label{lc8}
(\nabla^S)^3\phi(Y,\xi_s,\xi_t)=\nabla^3\phi(Y,\xi_s,\xi_t)-(\nabla^S)^2\phi(L(Y,\xi_s),\xi_t)-(\nabla^S)^2\phi(\xi_s,L(Y,\xi_t))=0.
\end{gather}
Finally, we calculate
\begin{gather}\label{lc9}
(\nabla^S)^3\phi(\xi_s,\xi_s,Y)=\nabla^3\phi(\xi_s,\xi_s,Y)-(\nabla^S)^2\phi((L(\xi_s,\xi_s),Y)-(\nabla^S)^2\phi(\xi_s,L(\xi_s,Y))=-\frac{S}2d\phi(Y);\\\label{lc10}
(\nabla^S)^3\phi(\xi_s,\xi_t,Y)=\nabla^3\phi(\xi_s,\xi_t,Y)-(\nabla^S)^2\phi(L(\xi_s,\xi_t),Y)-(\nabla^S)^2\phi(\xi_s,L(\xi_t,Y))\\=-\frac{S}2d\phi(I_sI_tY)-\frac{S}2d\phi(I_tI_sY)=0.
\end{gather}
Equations \eqref{lc1}-\eqref{lc10} show the validity of  \eqref{eqlv1} for all $A,B,C\in\Gamma(TM)$. This completes the proof of the Proposition.
\end{proof}

\section{Compact qc-hypersurfaces}
\subsection{Proof of Theorem~\ref{t:main1}}
\begin{proof}
We begin by showing that if a function $\phi$ satisfies
\eqref{eqbi1}, then $h\overset{def}{=}\triangle \phi$ is
necessarily an eigenfunction for the sub-Laplacian $\triangle h
=tr^g(\nabla^2 h)$.  Indeed, see \cite[(2.7)]{Tan} for the analogous calculation in the Riemannian case, taking a trace in \eqref{eqbi1} we obtain that $X(\triangle \phi)=-4(n+1)Sd\phi(X)$
which yields $\nabla^2\triangle \phi(X,Y)=-4(n+1)S\,\nabla^2\phi
(X,Y)$ and $\triangle h=- 4(n+1)Sh.$  Since $M$ is compact  it follows $S\geq 0$.

If  the qc-scalar curvature vanishes, $S=0$, then  it follows $\phi=const$, which contradicts our assumption that $M$ is non-umbilic, see Corollary \ref{l:umbilic}.  Thus, we have $S>0$.  In fact, after a qc-homothety, we can assume that $S=2$. Let $h\overset{def}{=}h^S$ be the corresponding Riemannian metric on $M$. Now, in view of \eqref{eqlv1},  by Gallot-Obata-Tanno's theorem \cite{Tan,Galo,Obata}  it follows that the Riemannian manifold $(M, h)$ is  isometric to the round sphere of radius $1$.
Therefore, the curvature tensor $R^h$ of the Levi-Civita connection $\nabla^h$ of $h$ is given by
\begin{equation}\label{riem}
R^h(A,B,C,D)=h(B,C)h(A,B)-h(B,D)h(A,C).
\end{equation}
 The relation between the curvatures of the Levi-Civita connection and the Biquard connection for qc-Einstein spaces with $S=2$ (i.e., 3-Sasakian spaces) \cite[Corollary~4.13]{IMV1} or \cite[Theorem~4.4.3]{IV3} together with \eqref{riem} yields
\begin{multline}\label{biqcurv}
R(X,Y,Z,W)=R^h(X,Y,Z,W)\\+\sum_{s=1}^3\Big[\omega_s(Y,Z)\omega_s(X,W)-\omega_s(X,Z)\omega_s(Y,W)-2\omega_s(X,Y)\omega_s(Z,W)\Big]\\
=h(Y,Z)h(X,W)-h(Y,W)h(X,Z)+\sum_{s=1}^3\Big[\omega_s(Y,Z)\omega_s(X,W)-\omega_s(X,Z)\omega_s(Y,W)-2\omega_s(X,Y)\omega_s(Z,W)\Big].
\end{multline}

According to
\cite[Proposition~4.2]{IV1}, the qc conformal curvature tensor
$W^{qc}$ can by expressed in terms of the curvature $R$ of the
Biquard connection, in general, on a qc-Einstein spaces with qc
scalar curvature $S$ by the formula
\begin{multline}\label{wqc}
W^{qc}(X,Y,Z,W)=R(X,Y,Z,W)+\frac{S}2\Big\{-g(X,W)g(Y,Z)+g(X,Z)g(Y,W)+\\
\sum_{s=1}^3\Big[-\omega_s(X,W)\omega_s(Y,Z)+\omega_s(X,Z)\omega_s(Y,W)+2\omega_s(X,Y)\omega_s(Z,W)\Big]\Big\}.
\end{multline}
Then, since in our case $g(X,Y)=h(X,Y)$ and $S=2$, \eqref{biqcurv} implies that $W^{qc}=0$
and therefore, $(M,\eta)$ is qc-conformally flat (cf. \cite[Theorem~1.3]{IV1}).
Now, the result follows by Theorem \ref{qcc}.

Let us remark that the final step of the proof is similar to an argumentation that had been already used before in the proof of \cite[Theorem 1.3]{IPV}.

\end{proof}

 In the case of a positive qc-scalar curvature of the calibrated qc structure we can substitute the compactness with completeness assumption of the Riemannian metric noting that the Gallot-Obata-Tanno's theorem holds for  a
complete Riemannian manifold. In particular, the manifold is compact. In addition, the local qc-conformal maps considered in the proof of Theorem \ref{t:main1} define a global qc-conformality to the round sphere, see Theorem \ref{qcc}. Therefore, we have
\begin{thrm}\label{main12}
Let $M$ be a simply connected qc hypersurface of a hyper-K\"ahler
manifold which is not totally umbilical. Suppose that the
calibrated qc structure $(\eta_1,\eta_2,\eta_3)$ on $M$ has a
positive qc-scalar curvature and that it is complete with respect
to the natural Riemannian metric $h=g+\eta_1^2+\eta_2^2+\eta_3^2$.
Then the calibrated qc structure on $M$ is  qc-homothetic to the standard 3-Sasakian sphere.
\end{thrm}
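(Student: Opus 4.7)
The plan is to mirror the proof of Theorem \ref{t:main1} and leverage the stronger (complete) version of the Gallot--Obata--Tanno rigidity theorem in place of the compactness hypothesis. First, I would set $\phi = \tfrac{1}{2}f^2$, where $f$ is the calibrating function of the embedding. Since the hypothesis excludes the totally umbilical case, Corollary \ref{l:umbilic} guarantees $\phi$ is non-constant. By Lemma \ref{l:key lemma}, $\phi$ satisfies the horizontal system \eqref{eqbi3}--\eqref{eqbi1} on $M$ with respect to the Biquard connection. The hypothesis that the qc-scalar curvature $S$ is positive allows, up to a preliminary qc-homothety, the normalization $S=2$; in that normalization the metric $h$ in the statement is precisely $h^S$ with $\mu=S/2=1$, and the Proposition translates \eqref{eqbi3}--\eqref{eqbi1} into the Riemannian identity \eqref{eqlv1} for $\phi$ with respect to the Levi-Civita connection of $h$.

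Next, I would invoke the complete version of the Gallot--Obata--Tanno theorem \cite{Galo,Obata,Tan}: a complete Riemannian manifold admitting a non-constant smooth solution to \eqref{eqlv1} (with $S=2$) is isometric to the unit round sphere. Completeness of $(M,h)$ is in the hypothesis, so $(M,h)$ is isometric to the round sphere, which in particular forces $M$ to be compact. The computation \eqref{riem}--\eqref{wqc} from the proof of Theorem \ref{t:main1} then applies verbatim: substituting the round-sphere curvature into the formula relating the Levi-Civita and Biquard curvatures on a qc-Einstein space with $S=2$ yields $W^{qc}\equiv 0$, so the calibrated qc structure $(\eta_1,\eta_2,\eta_3)$ is qc-conformally flat.

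The final step is to upgrade the local qc-conformal equivalence to a global one. Here the simply-connectedness of $M$ enters: I would apply Theorem \ref{qcc}, the global version of the qc-conformal flatness theorem, which provides a global qc-conformal diffeomorphism from $M$ onto the standard 3-Sasakian sphere. To turn qc-conformality into qc-homothety, observe that both the calibrated qc structure on $M$ and the round 3-Sasakian structure on the sphere are qc-Einstein with constant qc-scalar curvature, and two qc-Einstein structures in the same qc-conformal class are related by a constant positive factor (the conformal factor is forced to be constant by the qc-Einstein equation and the structure equations \eqref{str_eq_mod}). Thus the calibrated structure is qc-homothetic to the round 3-Sasakian sphere, as required.

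The main obstacle is the last step, i.e., making sure that Theorem \ref{qcc} really does deliver a globally defined qc-conformal map (rather than merely the vanishing of $W^{qc}$ together with local flatness), and that simply-connectedness of $M$ together with compactness is enough to preclude nontrivial covering issues. Once that global qc-conformal map is in hand, the reduction from qc-conformality to qc-homothety between two qc-Einstein structures is routine, and the rest of the argument is a direct transplantation of the proof of Theorem \ref{t:main1} with "compact" replaced by "complete with $S>0$".
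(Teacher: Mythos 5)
Up to the qc-conformal flatness your argument is the paper's own: the non-umbilicity plus Corollary~\ref{l:umbilic} give a non-constant $\phi=\tfrac12 f^2$, Lemma~\ref{l:key lemma} and the Proposition give \eqref{eqlv1}, the complete (rather than compact) version of the Gallot--Obata--Tanno theorem gives that $(M,h)$ with $S$ normalized to $2$ is isometric to the unit sphere (hence $M$ is compact), and the curvature comparison then yields $W^{qc}=0$ exactly as in the proof of Theorem~\ref{t:main1}. The problem is your last step. The claim that two qc-Einstein structures in the same qc-conformal class must differ by a constant factor is false: on the $3$-Sasakian sphere, the pullback of the standard structure by any non-isometric conformal quaternionic automorphism is again qc-Einstein and is qc-conformal to the standard structure with a \emph{non-constant} factor; classifying precisely these non-trivial conformal factors is the content of the Obata-type uniqueness results \cite{IPV,IV14}. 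So the qc-Einstein equation together with \eqref{str_eq_mod} does not force constancy of $\mu$, and your ``routine'' upgrade from qc-conformality to qc-homothety does not go through as written.

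The step is also unnecessary, because you have misread what Theorem~\ref{qcc} delivers. As stated and as proved (via the Kobayashi--Nomizu affine equivalence theorems \cite{KN1} applied to the Biquard connection, whose torsion and curvature are parallel for a qc-conformally flat qc-Einstein structure), it produces a local qc-\emph{homothety} with the model space, and a \emph{global} qc-homothety once $M$ is connected, simply connected and the Biquard connection is complete --- not merely a global qc-conformal diffeomorphism whose factor then needs to be constant. In your situation completeness of the Biquard connection is available: $M$ is compact (isometric to the round sphere), and by \eqref{der} the Biquard connection preserves the metric $h$, so its geodesics have constant $h$-speed and extend indefinitely. Hence quoting Theorem~\ref{qcc} as stated finishes the proof directly; if you insist on first producing a global qc-conformal map, the passage to a homothety would have to invoke the Obata uniqueness theorem on the sphere \cite{IPV,IV14}, not the constancy claim you propose.
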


\section{Locally embedded qc-hypersurfaces}\label{s:loc emb}

In the non-compact case we show
\begin{thrm}\label{main2}
Let $M$ be a qc-hypersurface in a hyper-K\"ahler manifold such  that all  points of $M$ are non-umbilic. Then there exists a 7 dimensional involutive distribution
$\mathcal D$ on $M$ such that the induced qc structure on  each
integral leaf of $\mathcal D$ is locally  qc-conformal to the
standard 7-dimensional 3-Sasakian sphere.
\end{thrm}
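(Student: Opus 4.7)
By Corollary~\ref{l:umbilic}, the everywhere non-umbilical assumption on $M$ is equivalent to $\nabla\phi\neq 0$ throughout $M$, where $\phi=f^2/2$. At each $p\in M$ the four horizontal vectors $\nabla\phi,I_1\nabla\phi,I_2\nabla\phi,I_3\nabla\phi$ are pairwise $g$-orthogonal and nonzero, so their real span $\mathcal H_p\subset H_p$ is $4$-dimensional and $\mathcal Q^H$-invariant. Put $\mathcal D\overset{def}{=}V\oplus\mathcal H$, a rank-$7$ distribution on $M$.

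The strategy is to realize $\mathcal D$ as $\tilde{\mathcal D}\cap TM$ for a $D$-parallel sub-bundle $\tilde{\mathcal D}\subset TK|_M$: involutivity is then automatic, since for $A,B\in\Gamma(\mathcal D)$ the bracket $[A,B]=D_AB-D_BA$ belongs to $TM$ and, by $D$-parallelism of $\tilde{\mathcal D}$, also to $\tilde{\mathcal D}$. To produce $\tilde{\mathcal D}$ we introduce the $g$-symmetric horizontal endomorphism $\mathcal W:H\to H$ defined by $g(\mathcal W X,Y)=\nabla^2\phi(X,Y)+S\phi g(X,Y)$; by \eqref{eqbi2}, $\mathcal W$ commutes with each $I_s$, so it is $\mathbb H$-linear and its eigenspaces are $\mathcal Q^H$-invariant of real dimension divisible by four. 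The decisive step is to prove that $\nabla\phi$ is a $\mathcal W$-eigenvector, so that $\mathcal H$ is one full eigenspace of $\mathcal W$. The vanishing of the $I_s\nabla\phi$-components of $\mathcal W\nabla\phi$ follows immediately from \eqref{eqbi2} together with $d\phi(I_s\nabla\phi)=0$, which imply that $\nabla^2\phi|_{\mathcal H\times\mathcal H}$ is a scalar multiple of $g|_{\mathcal H}$; the remaining claim, that $Z(|\nabla\phi|^2)=0$ for every $Z\in\mathcal H^\perp\cap H$, is to be extracted by specializing \eqref{eqbi1} at horizontal triples of the form $(\nabla\phi,\nabla\phi,Z)$, $(Z,\nabla\phi,\nabla\phi)$ and $(Z,Z,\nabla\phi)$, combined with the commutation of iterated Biquard derivatives through vector fields in $\mathcal H^\perp$.

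Granted the $\mathcal W$-eigenspace property, Lemma~\ref{Delta}~(i) shows that each $X\in\mathcal H$ admits a canonical extension $\tilde X\in TK|_M$ lying in the same eigenspace of $\mathfrak W$, via a formula of the form $\tilde X=X+\alpha(X)\xi+\sum_s\beta_s(X)\xi_s$ with $\alpha(X)$ and $\beta_s(X)$ expressed through $df(X)$ and $df(I_sX)$. The resulting $4$-dimensional subspace $\tilde{\mathcal H}$ together with $V\oplus\mathbb R\xi$ (which sits inside the $S/2$-eigenspace of $\mathfrak W$ by parts (ii)-(iii) of Lemma~\ref{Delta}) forms the required $D$-parallel sub-bundle $\tilde{\mathcal D}=\tilde{\mathcal H}\oplus V\oplus\mathbb R\xi$, the $D$-parallelism along $M$ being a consequence of \eqref{parallel}. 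A direct computation with the explicit form of $\tilde X$ yields $\tilde{\mathcal D}\cap TM=\mathcal D$, giving involutivity.

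Finally, each integral leaf $L$ of $\mathcal D$ inherits from the calibrated qc-Einstein data $(\eta_s,g,I_s)$ a $7$-dimensional qc-Einstein structure with qc-scalar curvature $S$, for which $\phi|_L$ is a non-constant function satisfying the identities \eqref{eqbi3}--\eqref{eqbi1} on $L$; in particular, every point of $L$ is non-umbilic. Propagating $\tilde{\mathcal D}$ from $M$ to a neighborhood of $M$ in $K$ by $D$-parallel transport along normal geodesics exhibits $L$ as a qc-hypersurface of an $8$-dimensional hyper-K\"ahler submanifold of $K$, and Theorem~\ref{t:main2} then identifies $L$, locally up to qc-conformality, with one of the three model spaces in~\eqref{e:model spaces}. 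Since those three models are locally qc-conformal to one another (as recalled in the introduction), $L$ is locally qc-conformal to the standard $3$-Sasakian sphere. The main technical obstacle is the $\mathcal W$-eigenvector reduction; the remaining steps are formal consequences of the parallelism of $\mathfrak W$, the Frobenius theorem, and the $7$-dimensional classification already available in this section.
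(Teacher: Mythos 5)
Your proposal rests on two unestablished steps, and the first is precisely the point you yourself call the ``decisive step''. You need $\nabla\phi$ to be an eigenvector of the Hessian operator $\mathcal W$, equivalently $Z(|\nabla\phi|^2)=0$ for horizontal $Z$ orthogonal to $\mathcal H$, and you only say this ``is to be extracted by specializing \eqref{eqbi1}''. That extraction cannot work from \eqref{eqbi3}--\eqref{eqbi1} alone: with $S=0$ these identities are satisfied on the quaternionic Heisenberg group (a qc-Einstein space obeying \eqref{str_eq_mod}) by $\phi=\sum_a\lambda_a|q_a|^2$ with distinct constants $\lambda_a$, whose horizontal Hessian in a left-invariant frame is block-diagonal with eigenvalues $2\lambda_a$, so $\nabla\phi$ is not a Hessian eigenvector. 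Hence any proof of your key claim must use strictly more than the three displayed identities (for instance the full parallelism \eqref{parallel} of $\mathfrak W$), and you never do so. Your supporting assertion that $V\oplus\mathbb R\xi$ sits inside an eigenspace of $\mathfrak W$ is moreover contradicted by Lemma \ref{Delta}~(ii)--(iii): $\mathfrak W\xi=(Sf/2)\nabla f+(Sf^2/2)\xi$, which is not proportional to $\xi$ when $S\neq0$, since $\nabla f\neq 0$ everywhere under the non-umbilicity hypothesis. Consequently your $\tilde{\mathcal D}$ is not an eigenbundle of $\mathfrak W$, and its $D$-parallelism along $M$ does not follow ``as a consequence of \eqref{parallel}'' as you claim; what you would actually need is invariance of $\tilde{\mathcal D}$ under $D_A$ for $A\in\mathcal D$, which again reduces to the unproven eigenvector property via Lemma \ref{D-nabla}.

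The final step is also a genuine leap: $D$-parallel transport of $\tilde{\mathcal D}$ along normal geodesics is parallelism along $M$ only; it does not produce an integrable distribution on a neighborhood of $M$ in $K$, nor an $8$-dimensional hyper-K\"ahler submanifold containing the leaf, and even granting such a submanifold you would still have to verify that the leaf is a qc-hypersurface of it (definite, quaternion-invariant second fundamental form) and everywhere non-umbilic there before Theorem \ref{t:main2} could be invoked. Likewise, your claim that $\phi|_L$ satisfies \eqref{eqbi3}--\eqref{eqbi1} on the leaf is not justified, since the third-order identity involves derivatives transverse to the leaf and does not restrict automatically. The paper's proof of Theorem \ref{main2} avoids all of this: involutivity of $D=\mathrm{span}\{\xi_1,\xi_2,\xi_3,\nabla\phi,I_1\nabla\phi,I_2\nabla\phi,I_3\nabla\phi\}$ is obtained in Lemma \ref{integr} by computing Lie brackets of the spanning fields directly from \eqref{eqbi2}, \eqref{eqbi3} and the structure equations; each leaf is then handled intrinsically -- the pulled-back $1$-forms still satisfy \eqref{str_eq_mod}, so the leaf is qc-Einstein, the bracket identity \eqref{komutator-phi} forces $\tilde\nabla^2\tilde\phi=h\,\tilde g$ on the four-dimensional leaf-horizontal space, Lemma \ref{dim7-phi} gives $W^{qc}=0$, and \cite[Theorem~3.1]{IV1} yields the local qc-conformality to the $3$-Sasakian sphere. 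As it stands, your argument does not close either of its two central steps.
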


\begin{proof}
We achieve Theorem~\ref{main2} with a series of lemmas. We begin
with the following

\begin{lemma}\label{integr} Let $M$ be a qc Einstein space with local qc 1-forms $\eta_1,\eta_2,\eta_3$
satisfying the structure  equations \eqref{str_eq_mod} and let $\xi_1,\xi_2,\xi_3$ be the corresponding
Reeb vector fields. If there exists a function $\phi$ with a nowhere vanishing horizontal gradient $\nabla\phi$ on $M$, satisfying
\eqref{eqbi1}-\eqref{eqbi3}, then the 7-dimensional distribution
$D=span\{\xi_1,\xi_2,\xi_3,\nabla\phi,I_1\nabla\phi,I_2\nabla\phi,I_3\nabla\phi\}$
is integrable.
\end{lemma}
\begin{proof}
Since $\eta_1,\eta_2,\eta_3$ satisfy \eqref{str_eq_mod}, the
vertical distribution $\text{spand}\{\xi_1,\xi_2,\xi_3\}$ is
integrable and we have $\nabla_X\xi_s=0$. Moreover,
\begin{multline}\label{komutator-phi}[\nabla\phi,I_i\nabla\phi]=\nabla_{\nabla\phi}(I_i\nabla\phi)-\nabla_{I_i\nabla\phi}(\nabla\phi)-T(\nabla\phi,I_i\nabla\phi)\\
=-I_i\nabla_{\nabla\phi}(\nabla\phi)-\nabla_{I_i\nabla\phi}(\nabla\phi)-2\sum_{t=1}^3\omega_t(\nabla\phi,I_i\nabla\phi)\xi_t
\overset{\eqref{eqbi2}}=-2g(\nabla\phi,\nabla\phi)\xi_i.
\end{multline}

We have also that $T({\xi_s},X)=0$, which leads to
\[[\xi_s,\nabla\phi]=\nabla_{\xi_s}\nabla\phi-\nabla_{\nabla\phi}\xi_s-T(\xi_s,\nabla\phi)=\nabla^2\phi(\xi_s,e_a)e_a-\nabla_{\nabla\phi}\xi_s\overset{\eqref{eqbi3}}=\nabla_{\nabla\phi}\xi_s\subset D.
\]
Similarly,  $[\xi_s,I_t\nabla\phi]\subset D$ and thus the
integrability of the distribution $D$ is proved.
\end{proof}

We need the following
\begin{lemma}\label{propqc}
The qc-conformal curvature of a qc-Einstein space has the property
\[
W^{qc}(X,Y,Z,U)=W^{qc}(Z,U,X,Y)=W^{qc}(X,Y,I_sZ,I_sU)=W^{qc}(I_sX,I_sY,Z,U).
\]
\end{lemma}
\begin{proof}
The first equality in the lemma is already known, see e.g.
\cite{IMV4}. 
The second equality follows after a small calculation
using formula \eqref{wqc} combined with
\begin{equation}\label{rb1}
\rho_s=-S\omega_s,\quad
R(X,Y,Z,W)=R(Z,W,X,Y)
\end{equation}
 (cf. \cite[(3.28)]{IMV4} and \cite[Theorem 3.1]{IV1}).   
\end{proof}

We proceed with
\begin{lemma}\label{dim7-phi}Let $M$ be a 7-dimensional qc Einstein space with local qc 1-forms $\eta_1,\eta_2,\eta_3$, satisfying the structure  equations \eqref{str_eq_mod}, corresponding Reeb vector fields $\xi_1,\xi_2,\xi_3$ and Biquard connection $\nabla$. If there exists a function $\phi$ on $M$
satisfying at each point: (i) $\nabla\phi\ne 0$ , (ii) $d\phi(\xi_1)=d\phi(\xi_2)=d\phi(\xi_3)=0$ and (iii) $\nabla^2\phi(X,Y)=h g(X,Y)$, for a smooth function $h$ on $M$,
then $M$ is locally qc-conformally flat.
\end{lemma}

\begin{proof}
Since we assume that the qc 1-forms $\eta_s$ satisfy \eqref{str_eq_mod}, we have  $\nabla_X\xi_s=0$ and thus
\begin{equation}\label{n2phixiX}
\nabla^2\phi(\xi_s,X)=\nabla^2\phi(X,\xi_s)=X\Big(d\phi(\xi_s)\Big)=0.
\end{equation}

By differentiating {\it (iii)} we get
\begin{equation}\label{n3phi}
\nabla^3\phi(X,Y,Z)=dh(X)g(Y,Z).
\end{equation}

The Ricci identity for the Biquard connection $\nabla$ implies that
\begin{equation*}
\nabla^2_{X,Y}\nabla \phi-\nabla^2_{Y,X}\nabla\phi=R(X,Y)\nabla\phi - \nabla_{T(X,Y)}\nabla\phi\\=R(X,Y)\nabla\phi-2\omega_s(X,Y)
\nabla_{\xi_s}\nabla\phi\overset{\eqref{n2phixiX}}=R(X,Y)\nabla\phi,
\end{equation*}
which by means of \eqref{n3phi} gives
\begin{equation}\label{Ricci-id}
R(X,Y,Z,\nabla \phi) = -\nabla^3\phi(X,Y,Z)+\nabla^3\phi(Y,X,Z)=-dh(X)g(Y,Z)+dh(Y)g(X,Z).
\end{equation}

We take a trace in \eqref{Ricci-id} to obtain
\begin{equation}
Ric(X,\nabla\phi)=-3dh(X).
\end{equation}

On the other hand, since $M$ is qc Einstein, $Ric(X,Y)=6Sg(X,Y)$, hence $Ric(X,\nabla \phi)=6Sd\phi(X)$. Therefore,
\begin{equation}\label{2Sphih}
2Sd\phi(X)+dh(X)=0.
\end{equation}
The qc-conformal curvature tensor is given by \eqref{wqc}, which,
by \eqref{Ricci-id} and \eqref{2Sphih}, implies that
\begin{multline}\label{wqcf1}
W^{qc}(X,Y,Z,\nabla \phi)=R(X,Y,Z,\nabla\phi)+2S\Big(-d\phi(X)g(Y,Z)+d\phi(Y)g(X,Z)\Big)\\
-\Big(2Sd\phi(X)+dh(X)\Big)g(Y,Z)+\Big(2Sd\phi(Y)+dh(Y)\Big)g(X,Z)=0.
\end{multline}

Since the dimension of $M$ is seven and since by assumption
$\nabla \phi\ne 0$ on $M$, the vector fields
$\nabla\phi,I_1\nabla\phi,I_2\nabla\phi,I_3\nabla\phi$ form an
orthogonal frame of the 4-dimensional horizontal distribution $H$.
Then, by \eqref{wqcf1} and Lemma  ~\ref{propqc}, we have
$W^{qc}(X,Y,Z,I_s\nabla\phi)=-W^{qc}(X,Y,I_sZ,\nabla\phi)=0$ which
implies that $W^{qc}(X,Y,Z,W)=0$, i.e. $M$ is locally qc
conformally flat.

\end{proof}

The next lemma together with \cite[Theorem~3.1]{IV1} completes
the proof of Theorem~\ref{main2}.

\begin{lemma} Let $M$ be a qc Einstein space, $\phi$ be the
non-constant function satisfying \eqref{eqbi1}-\eqref{eqbi3} and
$D=span\{\xi_1,\xi_2,\xi_3,\nabla\phi,I_1\nabla\phi,I_2\nabla\phi,I_3\nabla\phi\}$
be the integrable distribution from Lemma~\ref{integr}. Then
each integral manifold $\iota:N\rightarrow M$ of $D$ caries an
induced qc structure,  defined locally by the 1-forms
$\iota^*(\eta_1), \iota^*(\eta_2),\iota^*(\eta_3)$, which is qc
conformally flat and qc-Einstein with  qc-scalar curvature with
the same sign as the qc-scalar curvature of $M$.
\end{lemma}
\begin{proof}
Let $\mathfrak j:N\rightarrow M$ be any integral manifold of $D$.
Then the pull-back 1-forms $\mathfrak j^*(\eta_1),\mathfrak
j^*(\eta_2),\mathfrak j^*(\eta_3)$ on $N$ define a qc structure on
$N$ with Reeb vector fields $\tilde\xi_s=\mathfrak
j_*^{-1}(\xi_s)$. The horizontal distribution on $N$ is then just
$\tilde H =\mathfrak j_*^{-1}(H)$ and the corresponding
quaternionic structure on it is given by the endomorphisms
$\tilde I_s=\mathfrak j_*^{-1}I_s\mathfrak j_*$. Moreover,  the
pull-backs of the structure equations \eqref{str_eq_mod} remain
satisfied on $N$ and thus the induced qc structure on $N$ is again
qc Einstein with the same qc scalar curvature as $M$. Let us
denote the corresponding Biquard connection on $N$ by
$\tilde\nabla$ and consider the function $\tilde\phi=\mathfrak j
^*\phi$. Then, clearly, $\tilde\nabla\ (\tilde\phi)=\mathfrak j
_*^{-1}\nabla\phi$ and thus, for any $s=1,2,3$,
\begin{equation*}
[\tilde\nabla\tilde\phi,\tilde I_s\tilde\nabla\tilde\phi] =\mathfrak j_*^{-1}[\nabla\phi,I_s\nabla\phi]\overset{\eqref{komutator-phi}}=\mathfrak j_*^{-1}\Big(-2g(\nabla\phi,\nabla\phi) \xi_s\Big)=-2\tilde g(\tilde\nabla\tilde\phi,\tilde\nabla\tilde\phi)\tilde\xi_s.
\end{equation*}
Therefore,
\begin{equation*}
-2\tilde g(\tilde\nabla\tilde\phi,\tilde\nabla\tilde\phi)\tilde\xi_s=[\tilde\nabla\tilde\phi,\tilde I_s\tilde\nabla\tilde\phi]=\tilde I_s(\tilde\nabla_{\tilde\nabla\tilde\phi}\tilde\nabla\tilde\phi)
-\tilde\nabla_{\tilde I_s\tilde\nabla\tilde\phi}\tilde\nabla\tilde\phi - 2\sum_{t=1}^3\tilde\omega_t(\tilde\nabla\tilde\phi,\tilde I_s\tilde\nabla\tilde\phi)\tilde\xi_t,
\end{equation*}
i.e. we have
\begin{equation*}
\tilde\nabla^2\tilde\phi(\tilde\nabla\tilde\phi,\tilde I_sX)=-\tilde\nabla^2\tilde\phi(\tilde I_s\tilde\nabla\tilde\phi,X),\qquad X\in \tilde H.
\end{equation*}
Since the four vector fields $\tilde\nabla\tilde\phi,\tilde I_1\tilde\nabla\tilde\phi, \tilde I_2\tilde\nabla\tilde\phi,\tilde I_3\tilde\nabla\tilde\phi$ define a frame for the distribution $\tilde H$ we obtain that
\begin{equation*}
\tilde\nabla^2 \phi (\tilde I_sX,\tilde I_sY)=\tilde\nabla^2 \phi (X,Y)
\end{equation*}
for any $X,Y\in\tilde H$ and $s=1,2,3$. This implies that
$\tilde\nabla^2\tilde\phi(X,Y)=h\tilde g(X,Y)$ and thus the
function $\tilde\phi$ satisfies the assertions of
Lemma~\ref{dim7-phi}. Therefore, the integral manifold $N$ is
locally qc-conformally flat.
\end{proof}

\end{proof}

We finish the section with the prof of Theorem \ref{t:main2}.
\begin{proof}
The proof is similar to that of Theorem \ref{t:main1} noting that, here, the qc-conformal flatness follows from Lemma \ref{dim7-phi}. However, the (constant) qc-scalar curvature is not necessarily positive. The proof is complete taking into account
Theorem~\ref{qcc}.
\end{proof}

\section{Appendix. }
In the course of the paper we used several times the fact that a qc-Einstein qc-conformally flat manifold is locally qc-homothetic to one of the standard model qc-spaces \eqref{e:model spaces}.  As indicated below, this fact has been essentially proved before, but due to its independent interest we formulate it explicitly. Furthermore, we include an argument for global equivalence.

\begin{thrm}\label{qcc}
A qc-conformally flat qc-Einstein manifold $M$ is
locally qc-homothetic to one of the following three model spaces:
the 3-Sasakian sphere $S^{4n+3}$, the quaternionic Heisenberg
group $\QH$ or the hyperboloid $S_3^{4n}$  depending on the sign of the qc-scalar curvature, respectively. If in addition $M$
is connected, simply connected with complete Biquard connection
then we have a global qc-homothety  with the model spaces \eqref{e:model spaces}.
\end{thrm}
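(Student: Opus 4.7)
The plan is to separate the two claims: first establish the local qc\nobreakdash-homothety with the model space matching the sign of $S$, and then promote this to a global statement under the simply-connected/completeness hypotheses.

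For the local claim, since $W^{qc}=0$, by \cite[Theorem 1.3]{IV1} the manifold $M$ is locally qc-conformal to the standard 3-Sasakian sphere $S^{4n+3}$. As recalled in the introduction, the three model spaces in \eqref{e:model spaces} are mutually locally qc-conformal, so $M$ is in fact locally qc-conformal to each of them. The nontrivial step is to upgrade this qc-conformal equivalence to a qc-homothety. Writing the conformal factor as $\bar\eta=\mu\Psi\eta$ with both $\eta$ and $\bar\eta$ qc-Einstein, one invokes the standard transformation law for the qc-scalar curvature under a qc-conformal change (see e.g.\ \cite{IMV1}), which gives a second order PDE for $\mu$. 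Under the qc-Einstein hypothesis on both sides this PDE reduces to the qc-Yamabe equation, whose only solutions leading again to a qc-Einstein structure are, up to the action of the qc-automorphism group of the corresponding model, the constants; see \cite[Theorem 1.2]{IMV1}. Matching the sign of $S$ with the model (positive with the sphere, zero with $\QH$, negative with the hyperboloid) therefore forces $\mu=\text{const}$ and $\Psi$ to be a constant $SO(3)$ matrix, yielding the desired local qc-homothety.

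For the global statement, assume in addition that $M$ is connected, simply connected, and complete with respect to the Biquard connection. One constructs a developing map $\Phi\colon M\to M_0$, where $M_0$ is the appropriate model space from \eqref{e:model spaces}, by analytically continuing one of the local qc-homotheties along paths in $M$. Simple connectedness guarantees that this continuation is monodromy-free, so $\Phi$ is well defined and a local qc-homothety. Completeness of the Biquard connection is then used to show that $\Phi$ is a covering map: a local qc-homothety is in particular an isometry of the associated Riemannian metric $h^{\mu}$ from \eqref{mh} (with $\mu=S/2$ or, in the $S=0$ case, of the analogous sub-Riemannian datum), and a local isometry from a complete Riemannian manifold to a connected model is a covering. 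Since each of the model spaces in \eqref{e:model spaces} is either already simply connected (the sphere for $n\ge 1$, and $\QH$) or becomes so after passing to the universal cover (the hyperboloid), simple connectedness of $M$ forces $\Phi$ to be a global qc-homothety onto the appropriate model.

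The main obstacle is the uniqueness step of the first paragraph, i.e.\ verifying that the qc-conformal factor relating two qc-Einstein representatives is necessarily constant once the sign of $S$ has been fixed to match that of the target model. In particular, one must rule out the nontrivial qc-Yamabe-type conformal factors that produce additional qc-Einstein metrics on the sphere (corresponding to the action of the noncompact qc-conformal group): these all have positive qc-scalar curvature and correspond to the action of the conformal group of $S^{4n+3}$, which is accounted for by the local nature of the statement and the freedom to compose with a qc-automorphism of the model.
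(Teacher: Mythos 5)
Your local step follows the route the paper only sketches (qc-conformal flatness plus classification of the conformal factor), but the key uniqueness claim is wrong as stated. If $\eta$ and $\bar\eta=\mu\Psi\eta$ are both qc-Einstein with the same positive qc-scalar curvature, $\mu$ need \emph{not} be constant: any non-isometric qc-conformal transformation of the 3-Sasakian sphere produces a non-constant $\mu$ between two qc-Einstein structures with $S=2$. So ``matching the sign of $S$ forces $\mu=\mathrm{const}$'' is false; what one actually needs is the explicit classification of \emph{all} qc-Einstein structures in the conformal class of the flat structure on the quaternionic Heisenberg group, which shows that each such structure is the pull-back, up to homothety, of one of the three models according to the sign of $S$. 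That classification is \cite[Theorem~6.2]{IV14} (all signs); \cite[Theorem~1.2]{IMV1}, which you cite, only covers the case leading to positive qc-scalar curvature, so your argument as written does not treat $S=0$ or $S<0$ at all. Your closing remark about ``composing with a qc-automorphism of the model'' does not repair this: the relevant freedom is composition with a qc-\emph{conformal} transformation of the model, and invoking it amounts precisely to the classification result you did not cite.

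The global step has a more serious gap. Your covering-map argument uses completeness of the Riemannian metric $h^{S/2}$ from \eqref{mh}, but the hypothesis of the theorem is completeness of the \emph{Biquard connection}, which is a different connection with different geodesics, and you do not relate the two. Worse, the argument only makes sense when $S>0$: for $S<0$ the tensor $h^{S/2}$ has signature $(4n,3)$ and the statement ``a local isometry from a complete manifold to a connected model is a covering'' fails for pseudo-Riemannian metrics, while for $S=0$ the metric degenerates and the ``analogous sub-Riemannian datum'' is left unspecified. (Also, the hyperboloid $\{|q|^2-|p|^2=-1\}\cong \mathbb{R}^{4n}\times S^3$ is already simply connected, so no passage to a universal cover is needed there.) The paper's own proof avoids all of this: using the qc-Einstein structure equations \eqref{str_eq_mod} and $W^{qc}=0$ it shows that the Biquard connection has parallel torsion and parallel curvature (the horizontal curvature being given explicitly by \eqref{wqc0}), and then applies the affine equivalence theorems \cite[Theorems~7.4 and 7.8]{KN1}, whose global version uses exactly the stated hypothesis of a complete (affine) Biquard connection and works uniformly for all signs of $S$, after checking that the model connections are complete. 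If you want to keep your developing-map strategy, you must either prove the needed covering property directly at the level of the Biquard connection (which is essentially the Kobayashi--Nomizu route) or restrict to $S>0$ and explicitly assume completeness of $h$, as in Theorem~\ref{main12}.
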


\begin{proof}By a qc-homothety, depending on the sign of the qc-scalar curvature, we can reduce the claim to one of the cases $S=2$, $S=0$ or $S=-2$.
We recall  that the model spaces \eqref{e:model spaces} are qc-Einstein qc-conformally flat manifolds  with
positive qc-scalar curvature $S=2$ in the case {\textit{i)}} of the 3-Sasakian sphere \cite{IMV1,IPV},  flat in the case of the
quaternionic Heisenberg group  \textit{iii)}
\cite{IMV1}, and negative qc-scalar
curvature $S=-2$,  \cite{IMV5}, for the hyperboloid  \textit{ii)}.

One proof  of the local equivalence goes as follows.
Due to the local qc-conformality with the quaternionic Heisenberg group, with the help of \cite[Theorem 6.2]{IV14}, see \cite[Theorem 1.2]{IMV1} for the positive qc-scalar curvature case,  we can determine the exact form of the conformal factor relating the invariant qc structure on the Heisenberg group to the image by a qc-conformal transformation of the  given  qc-Einstein structure.  The proof of the local equivalence statement in Theorem~\ref{t:main1} follows, for more details see \cite[Theorem 1.2]{IMV1} in the case of positive qc-scalar curvature, the paragraph after \cite[Lemma 8.6]{IV14} in the zero qc-scalar curvature case, while the negative qc-scalar curvature case follows analogously.
The global result in the case of a compact manifold is achieved by a monodromy argument and Liouville's theorem \cite[Theorem 8.5]{IV14}, \cite{CS09}. Below is an argument  using that in our case Biquard's connection is an affine connection with parallel torsion and parallel curvature, hence we can invoke the results in  \cite[Chapter VI]{KN1}.

For a qc-Einstein manifold we have from \cite{IMV1,IMV4}
$T^0=U=0$, the qc-scalar curvature is constant, $S=const$ and the
vertical space is integrable. As a consequence, on a qc-Einstein
manifold we have \cite{IMV1,IV1,IV3,IMV4}
\begin{eqnarray}\label{rho}
T(X,Y)=2\sum_{s=1}^3\omega_s(X,Y)\xi_s; \quad
T(\xi_i,\xi_j)=-S\xi_k, \\
\label{rb2}
R(\xi_s,X,Y,Z)=R(\xi_s,\xi_t,X,Y)=0,
\quad R(A,B)\xi=-2S\sum_{s=1}^3\omega_s(A,B)\xi_s\times\xi.
\end{eqnarray}
{ Using \eqref{der}, we obtain from \eqref{rho} that the torsion
of the Biquard connection is parallel, $\nabla T=0$. Similarly,
\eqref{rb2} implies
that $\nabla R(\xi_s,A,B,C)=\nabla
R(A,B,C,\xi_s)=0$.}

For the horizontal part of $R$ we apply the second condition of the qc-conformal flatness,
$W^{qc}=0$. A substitution of  \eqref{rb1} into  \eqref{wqc} gives
\begin{multline}\label{wqc0}
R(X,Y,Z,W)=\frac{S}2\Big[g(Y,Z)g(X,W)-g(Y,W)g(X,Z)\Big]\\+\frac{S}2\sum_{s=1}^3\Big[\omega_s(Y,Z)\omega_s(X,W)-\omega_s(X,Z)\omega_s(Y,W)-2\omega_s(X,Y)\omega_s(Z,W)\Big].
\end{multline}
Hence, by \eqref{der}, it follows that the horizontal curvature of the Biquard connection is  parallel as well, i.e., we have $\nabla T=\nabla R=0$.

Let $F$ be a
linear isomorphism between the tangent spaces $T_p(M)$ and $T_p'(M')$ of a point $p$ in $M$ and a point $p'$ in the model space \eqref{e:model spaces} of same qc-scalar curvature, such that, $F$ maps an orthonormal basis $\{e_a,I_1 e_a, I_2 e_a, I_3 e_a\}_{a=1}^n$ of the horizontal space at $p$ to the an orthonormal basis $\{e'_a,I'_1 e'_a, I'_2 e'_a, I'_3 e_a\}_{a=1}^n$ of the horizontal space at $p'$ and also sends the corresponding Reeb vector fields at $p$ to those at $p'$.
Thus, $F$ preserves the horizontal and vertical spaces
$F(H_p)=H'_q, \quad F(V_p)=V'_q$, and the $Sp(n)Sp(1)$-structure,
i.e., it maps the tensors $g_p,(I_s)|_p, (\xi_s)|_p$ at the point
$p\in M$ into the tensors $g'_q,(I'_s)|_q, (\xi'_s)|_q$. Taking into account $S=S'$, \eqref{rho} together with \eqref{rb2}, and \eqref{wqc0} show that $F$ maps the
torsion $T_p$ and the curvature $R_p$ at $p$ into the torsion
$T'_q$ and the curvature $R'_q$ at $q\in M'$, respectively.

Now, we can apply the affine equivalence theorem \cite[Theorem~7.4]{KN1} to obtain an affine local
isomorphism between $M$ and the coresponding model space. Since the qc structure $(H\oplus V,\mathbb{Q}, g)$ is
parallel the affine local isomorphism  is a qc-homothety.

Finally, if in addition $M$ is connected, simply connected with a
complete Biquard connections then \cite[Theorem~7.8]{KN1}
gives us a global qc-homothety to the corresponding model case.
We note that the Biquard connection in each of the model cases is complete since the 3-Sasakian spere is compact, the Biquard connection on the qc Heisenberg group  is an invariant connection of a homogeneous space, while the hyperboloid is $Sp(n,1)Sp(1)/Sp(n)Sp(1)$, see e.g. \cite[Theorem~5.1]{AK}, with
the invariant Biquard connection determined by
\eqref{nabla-lambda}.
\end{proof}

\end{document}